\tikzstyle{every node}=[font=\tiny]
\newcommand{\C}{\mathbb{C}}
\newcommand{\D}{\mathbb{D}}
\renewcommand{\H}{\mathbb{H}}
\newcommand{\N}{\mathbb{N}}
\newcommand{\R}{\mathbb{R}}
\newcommand{\Aut}{\mathrm{Aut}(\D)}
\theoremstyle{plain}
\newtheorem{theorem}{Theorem}[section]
\newtheorem*{theorem*}{Theorem}
\newtheorem{proposition}[theorem]{Proposition}
\newtheorem{lemma}[theorem]{Lemma}
\newtheorem{corollary}[theorem]{Corollary}
\theoremstyle{definition}
\newtheorem*{definition*}{Definition}
\newtheorem{remark}[theorem]{Remark}
\newtheorem{question}{Question}
\numberwithin{equation}{section}
\title[Spectrum of a Composition Operator]{Spectrum of a Composition Operator with Automorphic Symbol}
\author{Robert F.~Allen\textsuperscript{1}, Thong M.~Le\textsuperscript{2}, and Matthew A.~Pons\textsuperscript{3}}
\address{\textsuperscript{1}Department of Mathematics, University of Wisconsin-La Crosse}
\address{\textsuperscript{2}Department of Computer Science, University of California}
\address{\textsuperscript{3}Department of Mathematics, North Central College}
\email{rallen@uwlax.edu, thmle@ucdavis@edu, mapons@noctrl.edu}
\date{}
\subjclass[2010]{primary: 47B33, 47A10; secondary: 30H05.}
\keywords{Composition operator, Spectrum, Automorphism}
\begin{document}

\begin{abstract}
We give a complete characterization of the spectrum of composition operators, induced by an automorphism of the open unit disk, acting on a family of Banach spaces of analytic functions that includes the Bloch space and BMOA.  We show that for parabolic and hyperbolic automorphisms, the spectrum is the unit circle.  For the case of elliptic automorphisms, the spectrum is either the unit circle or a finite cyclic subgroup of the unit circle.  
\end{abstract}

\maketitle

\section{Introduction}
For an analytic self-map $\varphi$ of the open unit disk $\D$ and a Banach space $X$ of functions analytic on $\D$, we define the composition operator with symbol $\varphi$, denoted $C_\varphi$, by the rule $C_\varphi f = f\circ\varphi$ for all $f \in X$.  The study of composition operators began formally with Nordgren's paper \cite{Nordgren:1968}, where he explored properties of composition operators acting on the Hardy Hilbert space $H^2$. Since then, the study has proved to be an active area of research, most likely due to the fact that the study of such operators lies at the intersection of complex function theory and operator theory.

The spectrum of $C_\varphi$ has been studied on many classical spaces of analytic functions, such as the Hardy spaces, Bergman spaces, weighted Hardy and Bergman spaces, Besov spaces, and the Dirichlet space.  The interested reader is directed to \cite{CowenMacCluer:1995} for general references. 

The motivation of this paper was to determine the spectrum of a composition operator, induced by a disk automorphism, acting on the Bloch space.  The Bloch space is the largest space of analytic functions on $\D$ that is M\"obius invariant.  This is one reason the Bloch space is a welcoming environment to study composition operators.  The techniques developed apply to a larger class of spaces that includes the Bloch space. 

The purpose of this paper is to determine the spectrum of $C_\varphi$ acting on a family of Banach spaces, where $\varphi$ is a disk automorphism.  The spectrum will depend on the fixed point classification of the automorphisms of $\D$.  This is a standard approach to the study of composition operators induced by automorphisms.  We show the spectrum of $C_\varphi$, acting on a particular family of Banach spaces, induced by a disk automorphism, must be a subset of the unit circle $\partial \D$, and in some instances is the entire unit circle.  Finally, we compare these results to particular examples of classical spaces.  

\section{Preliminaries}\label{Section:Preliminaries}
\subsection{Automorphisms}\label{Subsection:Automorphisms}
The automorphisms of the open unit disk $\D = \{z \in \C: |z| < 1\}$ are precisely the analytic bijections on $\D$ which have the form
$$\varphi(z) = \lambda\frac{a-z}{1-\overline{a}z}$$ where $\lambda$ is a unimodular constant and $a$ is a point in $\D$.  These automorphisms form a group under composition denoted by $\Aut$.  Every element of $\Aut$ has two fixed points (counting multiplicity), and thus can be classified by the location of the fixed points: 
\begin{description}
	\item[elliptic] one fixed point in $\D$ and one in the complement of $\overline{\D}$;
	\item[parabolic] one fixed point on the unit circle $\partial \D$ (of multiplicity 2); 
	\item[hyperbolic] two distinct fixed points on $\partial \D$.
\end{description}

Two disk automorphisms $\varphi$ and $\psi$ are conformally equivalent if there exists a disk automorphism $\tau$ for which $\psi = \tau\circ\varphi\circ\tau^{-1}$.  Many properties of automorphisms are preserved under conformal equivalence.  The main advantage of conformal equivalence is in the placement of the fixed points.  Every elliptic disk automorphism is conformally equivalent to one whose fixed point in $\D$ is the origin.  

\begin{lemma}\label{Lemma:elliptic_confequiv} Let $\varphi$ be an elliptic disk automorphism with fixed point $a$ in $\D$.  Then $\varphi$ is conformally equivalent to $\psi(z) = \lambda z$ where $\lambda = \varphi'(a)$.
\end{lemma}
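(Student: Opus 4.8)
The plan is to conjugate $\varphi$ by a disk automorphism that moves $a$ to the origin, then check that the conjugate is a rotation by $\lambda$.

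First I would let $\alpha_a(z) = \dfrac{a-z}{1-\overline{a}z}$ be the standard involutive automorphism interchanging $0$ and $a$, so $\alpha_a \in \Aut$ and $\alpha_a^{-1} = \alpha_a$. Set $\psi = \alpha_a \circ \varphi \circ \alpha_a^{-1} = \alpha_a \circ \varphi \circ \alpha_a$. Then $\psi \in \Aut$ and $\varphi$ is conformally equivalent to $\psi$ via $\tau = \alpha_a$. The next step is to compute the fixed points of $\psi$: since $\varphi(a) = a$, we get $\psi(0) = \alpha_a(\varphi(\alpha_a(0))) = \alpha_a(\varphi(a)) = \alpha_a(a) = 0$, so $0$ is a fixed point of $\psi$. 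An automorphism of $\D$ fixing the origin must be a rotation $\psi(z) = \mu z$ for some unimodular $\mu$ (this is the Schwarz-lemma argument: both $\psi$ and $\psi^{-1}$ fix $0$, so $|\psi'(0)| \le 1$ and $|(\psi^{-1})'(0)| \le 1$, forcing $|\psi'(0)| = 1$ and hence equality in Schwarz).

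It then remains to identify $\mu$. By the chain rule applied to $\psi = \alpha_a \circ \varphi \circ \alpha_a$ at the point $0$, using $\alpha_a(0) = a$ and $\varphi(a) = a$ and $\alpha_a(a) = 0$, we get
\[
\mu = \psi'(0) = \alpha_a'(a)\,\varphi'(a)\,\alpha_a'(0).
\]
Since $\alpha_a$ is an involution, differentiating $\alpha_a(\alpha_a(z)) = z$ at $z = 0$ gives $\alpha_a'(a)\,\alpha_a'(0) = 1$, so $\mu = \varphi'(a) = \lambda$. Therefore $\psi(z) = \lambda z$, which is the desired form.

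I do not expect a serious obstacle here; the only point requiring a little care is the justification that an automorphism fixing $0$ is a rotation, but that is the classical Schwarz-lemma fact and can simply be cited. The computation $\alpha_a'(a)\alpha_a'(0) = 1$ is the one routine derivative identity, obtained either directly from the formula for $\alpha_a'$ or, more cleanly, from differentiating the involution relation.
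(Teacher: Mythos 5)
Your proof is correct and follows essentially the same route as the paper: conjugate by the involution $\tau_a$, observe the conjugate fixes the origin and hence is a rotation, and identify the rotation constant via the chain rule using $\tau_a'(a)\tau_a'(0)=1$. Your explicit justification of that last identity by differentiating the involution relation is a nice touch the paper leaves implicit.
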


\begin{proof}
	Let $\tau_a$ be the involution automorphism which interchanges 0 and $a$, that is $$\tau_a(z) = \frac{a-z}{1-\overline{a}z}.$$  Define $\psi = \tau_a\circ\varphi\circ\tau_a^{-1}$ on $\D$.  Since $a$ is a fixed point of $\varphi$, $\psi$ fixes the origin, and thus is a rotation.  So there exists an unimodular constant $\lambda$ such that $\psi(z) = \lambda z$.  To complete the proof, we will show $\lambda = \varphi'(a)$.  Observe $\psi'(z) = \lambda$ for all $z \in \D$.  In particular
	$$\lambda = \psi'(0) = \tau_a'(\varphi(\tau_a(0)))\varphi'(\tau_a(0))\tau_a'(0) = \varphi'(a)\tau_a'(a)\tau_a'(0) = \varphi'(a).$$  Thus $\varphi$ is conformally equivalent to the rotation $\psi(z) = \varphi'(a) z$.
\end{proof}

Every parabolic disk automorphism is conformally equivalent to one whose fixed point (of multiplicity 2) is 1. The following Lemma is found as Exercise 2.3.5c of \cite{CowenMacCluer:1995}, and a complete proof can be found in \cite{Pons:2007}. 

\begin{lemma}\cite[Lemma 4.1.2]{Pons:2007}\label{Lemma:parabolic_confequiv} Let $\varphi$ be a parabolic disk automorphism.  Then $\varphi$ is conformally equivalent to either $\psi_1(z) = \displaystyle\frac{(1+i)z-1}{z+i-1}$ or $\psi_2(z) = \displaystyle\frac{(1-i)z-1}{z-i-1}$.
\end{lemma}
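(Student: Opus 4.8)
The plan is to conjugate $\varphi$, through the Cayley transform, to the upper half-plane $\H$, where parabolic automorphisms are exactly the nonzero real translations, and then to read off the normal form.

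First I would normalize the fixed point. A parabolic $\varphi$ has a single fixed point $p\in\partial\D$ (of multiplicity $2$), and the rotation $R(z)=pz$ belongs to $\Aut$. The conjugate $R^{-1}\circ\varphi\circ R$ is again a parabolic disk automorphism, now fixing $1$; since conformal equivalence is transitive, there is no loss in assuming $\varphi(1)=1$ from the outset.

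Next I would pass to $\H$ via $\sigma(z)=i\frac{1+z}{1-z}$, a biholomorphism of $\D$ onto $\H$ with $\sigma(1)=\infty$ and $\sigma^{-1}(w)=\frac{w-i}{w+i}$. Then $\Phi:=\sigma\circ\varphi\circ\sigma^{-1}$ is an automorphism of $\H$ fixing $\infty$; as a M\"obius transformation it has $\sigma(1)=\infty$ as a double fixed point, so $\Phi(w)=w+b$ for some nonzero constant $b$, and $b$ is real because $\Phi$ maps $\H$ onto $\H$. Conjugating by the dilation $d_t(w)=tw$ with $t>0$ (again an automorphism of $\H$) replaces $b$ by $tb$, so with $t=2/|b|$ the map $\Phi$ becomes $w\mapsto w+2$ when $b>0$ and $w\mapsto w-2$ when $b<0$. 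Consequently $\tau:=\sigma^{-1}\circ d_t\circ\sigma$ is a biholomorphism of $\D$, hence an element of $\Aut$, and $\tau\circ\varphi\circ\tau^{-1}=\sigma^{-1}\circ(w\mapsto w\pm 2)\circ\sigma$.

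It then remains only to identify these two models. A direct M\"obius computation gives
$$\sigma^{-1}\circ(w\mapsto w-2)\circ\sigma=\psi_1 \qquad\text{and}\qquad \sigma^{-1}\circ(w\mapsto w+2)\circ\sigma=\psi_2,$$
so $\varphi$ is conformally equivalent to $\psi_1$ when $b<0$ and to $\psi_2$ when $b>0$. The only genuine ingredient is the classical normal form for parabolic M\"obius maps (a double fixed point at $\infty$ forces a translation, which must be by a real amount to preserve $\H$); everything else is bookkeeping, and the step that needs the most care is the last one — verifying that the assembled map $\tau$ really lies in $\Aut$ and that the closing computation with $\sigma$ lands the coefficients exactly on the stated $\psi_1$ and $\psi_2$ rather than on some other representative of the same conformal class.
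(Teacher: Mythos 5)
Your proof is correct. The paper itself gives no argument for this lemma --- it defers to Exercise 2.3.5c of Cowen--MacCluer and to Pons's thesis --- and the standard proof there is exactly your route: normalize the fixed point to $1$ by a rotation, transfer to the upper half-plane where a parabolic automorphism fixing $\infty$ must be a real translation $w \mapsto w+b$, rescale by a positive dilation to $b = \pm 2$, and pull back. I checked the closing computation with $\sigma(z) = i\frac{1+z}{1-z}$ and $\sigma^{-1}(w) = \frac{w-i}{w+i}$: one indeed gets $\sigma^{-1}\circ(w\mapsto w-2)\circ\sigma = \frac{(1+i)z-1}{z+i-1} = \psi_1$ and $\sigma^{-1}\circ(w\mapsto w+2)\circ\sigma = \frac{(1-i)z-1}{z-i-1} = \psi_2$, so the coefficients land exactly on the stated representatives.
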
 

Every hyperbolic disk automorphism is conformally equivalent to one whose fixed points in $\partial \D$ are $\pm 1$.

\begin{lemma}\cite[Theorem 6]{Nordgren:1968}\label{Lemma:hyperbolic_confequiv} Let $\varphi$ be a hyperbolic disk automorphism.  Then $\varphi$ is conformally equivalent to $\psi(z) = \displaystyle\frac{z+r}{1+rz}$ for some $r \in (0,1)$.
\end{lemma}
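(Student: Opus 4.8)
The plan is to follow the pattern of Lemma~\ref{Lemma:elliptic_confequiv}: first move the two (distinct) boundary fixed points of $\varphi$ to $1$ and $-1$ by a conformal equivalence, and then identify every element of $\Aut$ fixing $1$ and $-1$ as a member of the one-parameter family $\psi_r(z)=\frac{z+r}{1+rz}$, $r\in(-1,1)$.

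First I would use the fact that $\Aut$ acts transitively on ordered pairs of distinct points of $\partial\D$: given the fixed points $p\neq q$ of $\varphi$ on $\partial\D$, there is $\tau\in\Aut$ with $\tau(p)=1$ and $\tau(q)=-1$. Concretely, one may pick any point $z_0$ on the hyperbolic geodesic joining $p$ and $q$ and set $\tau_0(z)=\frac{z_0-z}{1-\overline{z_0}z}$; since $\tau_0$ maps $z_0$ to $0$ and carries geodesics to geodesics, it sends that geodesic to a diameter of $\D$, so $\tau_0(p)$ and $\tau_0(q)$ are antipodal, and post-composing with the rotation sending $\tau_0(p)$ to $1$ (which, being odd, sends $\tau_0(q)$ to $-1$) produces $\tau$. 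Put $\psi=\tau\circ\varphi\circ\tau^{-1}\in\Aut$, so that $\psi$ fixes both $1$ and $-1$.

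Next I would determine all $\psi\in\Aut$ fixing $1$ and $-1$. Writing $\psi(z)=\mu\frac{a-z}{1-\overline{a}z}$ with $|\mu|=1$ and $a\in\D$, the two conditions $\psi(1)=1$ and $\psi(-1)=-1$, upon dividing, force $a\in\R$, and then $\mu=-1$; with $r=-a\in(-1,1)$ this gives $\psi(z)=\frac{z+r}{1+rz}$. Since $\varphi$ is hyperbolic it is not the identity, so neither is $\psi$, whence $r\neq 0$. Finally, conjugating by the automorphism $z\mapsto -z$ turns $\psi_r$ into $\psi_{-r}$, so after one further conformal equivalence we may assume $r\in(0,1)$; transitivity of conformal equivalence then yields the claim.

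The one step that calls for care is the first reduction: checking that a disk automorphism really can be chosen so that the unordered pair $\{p,q\}$ maps onto $\{1,-1\}$ while $\D$ is preserved (the geodesic picture makes this transparent, and also makes clear that the labels need not be worried about separately, since the rotation used is odd). Once the fixed points have been normalized to $\pm1$, everything else is a routine computation with M\"obius maps.
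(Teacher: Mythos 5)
Your argument is correct and complete: the geodesic trick legitimately moves the two boundary fixed points to $\pm 1$, the computation that an automorphism fixing both $1$ and $-1$ must have $a\in\R$ and $\mu=-1$ (hence the form $\frac{z+r}{1+rz}$ with $r\in(-1,1)\setminus\{0\}$) checks out, and conjugation by $z\mapsto -z$ correctly reduces to $r\in(0,1)$. The paper itself offers no proof of this lemma, citing Nordgren's Theorem 6 instead, so there is nothing to compare against; what you have written is the standard normalization argument and would serve as a valid self-contained proof.
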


\subsection{The Space of Bounded Analytic Functions}\label{Subsection:Bloch}
The set of analytic functions on $\D$ is denoted by $H(\D)$.  The space of bounded analytic functions on $\D$, denoted $H^\infty = H^\infty(\D)$, is a Banach space under the norm $$\|f\|_\infty = \sup_{z \in \D}\,|f(z)|.$$  The bounded analytic functions on $\D$ is a rich space containing many interesting types of functions, such as polynomials and Blaschke products.  In addition, the disk algebra $\mathcal{A}(\D)$, the set of analytic functions on $\D$ continuous to $\partial\D$, is a closed subspace of $H^\infty$.

The following two families of functions will be used in the next section.  To prove these functions are in $H^\infty$, we take a geometric approach using conformal mappings of the plane.  To this effect, let $\H_\ell$ and $\H_r$ denote the open left and right half planes respectively, i.e. $\H_\ell = \{\mathrm{Re}\, z < 0\}$ and $\H_r = \{\mathrm{Re}\, z > 0\}$.

\begin{lemma}\label{Lemma:exp_bloch} For $s\ge 0$, the function $f_s(z) = \exp\left(\displaystyle\frac{s(z+1)}{z-1}\right)$ is in $H^\infty$.\end{lemma}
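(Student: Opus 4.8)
The plan is to realize $f_s$ as the composition of the exponential with a scaled M\"obius transformation, and to use the mapping properties of that M\"obius map to bound $|f_s|$. Concretely, set $g(z) = \dfrac{z+1}{z-1}$; I claim $g$ maps $\D$ conformally onto the left half plane $\H_\ell$, and then $f_s = \exp\circ(s\,g)$ will visibly be a bounded analytic function.

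First I would check that $g$ is analytic on $\D$ (immediate, since its only pole $z=1$ lies outside $\D$) and then locate $g(\D)$. A M\"obius transformation carries circles and lines to circles and lines and is determined by three points, so I would track the boundary: $g(-1)=0$, $g(i)=-i$, $g(-i)=i$, and $g(z)\to\infty$ as $z\to 1$. These images all lie on the extended imaginary axis, so $g(\partial\D)$ is the imaginary axis; testing the interior point $g(0)=-1$ then pins down $g(\D)=\H_\ell$, i.e. $\mathrm{Re}\,g(z)<0$ for every $z\in\D$. (If one prefers, this is exactly the standard Cayley-type picture and could be accompanied by a figure, but the four computations above suffice.)

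Next, since $s\ge 0$, the function $s\,g$ is analytic on $\D$ with $\mathrm{Re}\big(s\,g(z)\big)=s\,\mathrm{Re}\,g(z)\le 0$ there, so $f_s(z)=\exp\big(s\,g(z)\big)$ is analytic on $\D$ as a composition of analytic functions. Finally, using that $|e^{\zeta}|=e^{\mathrm{Re}\,\zeta}\le 1$ whenever $\mathrm{Re}\,\zeta\le 0$, apply this with $\zeta=s\,g(z)$ to conclude $|f_s(z)|\le 1$ for all $z\in\D$, hence $\|f_s\|_\infty\le 1$ and $f_s\in H^\infty$. (The degenerate case $s=0$ is trivial, as $f_0\equiv 1$.) There is no genuine obstacle here; the only point demanding a moment of care is getting the \emph{orientation} of $g$ right, i.e. verifying that $\D$ is sent to the left half plane rather than the right one, which is settled by the single evaluation $g(0)=-1$.
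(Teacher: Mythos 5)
Your proof is correct and follows essentially the same route as the paper: both decompose $f_s$ as the exponential of $s$ times the M\"obius map $z\mapsto\frac{z+1}{z-1}$, which sends $\D$ onto the left half plane, and conclude boundedness from $|e^{\zeta}|\le 1$ for $\mathrm{Re}\,\zeta\le 0$. Your version is slightly more explicit in verifying the mapping $g(\D)=\H_\ell$ via boundary and interior test points, where the paper relies on a figure and the standard conformal-mapping facts, but the substance is identical.
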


\begin{proof}
	If $s = 0$, then $f_s(z) = 1$. So, $f_s(z)$ is in $H^\infty$. Now suppose $s > 0$. The function $f_s$ is comprised of the functions
	\begin{enumerate}
		\item $z \mapsto \frac{z+1}{z-1}$; mapping $\D$ onto $\H_\ell$,
		\item $z \mapsto sz$; mapping $\H_\ell$ onto $\H_\ell$,
		\item $z \mapsto e^z$; mapping $\H_\ell$ onto $\D\setminus\{0\}$.
	\end{enumerate}
	
	\begin{center}
		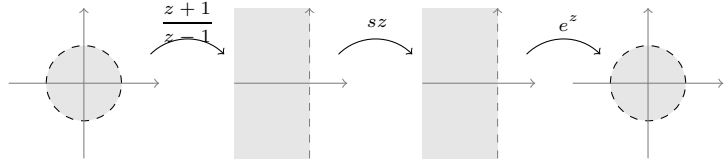
\begin{figure}[h]
			\begin{tikzpicture}
			\draw[fill=gray!20!white, dashed] (0,0) circle (.5);  
			\draw[->, gray] (-1,0) -- (1,0);
			\draw[->, gray] (0,-1) -- (0,1);
			
			\draw (.75,.25) node (d1) {};
			\draw (2,.25) node (d2) {};
			\draw[->] (d1) to [bend left=45] (d2);
			\node [] at (1.375,.8) {$\frac{z+1}{z-1}$};
			
			\fill [gray!20!white] (2,-1) rectangle (3,1);
			\draw[->, gray] (2,0) -- (3.5,0);
			\draw[->, gray, dashed] (3,-1) -- (3,1);
			
			\draw (3.25,.25) node (d3) {};
			\draw (4.5,.25) node (d4) {};
			\draw[->] (d3) to [bend left=45] (d4);
			\node [] at (3.9,.8) {$sz$};
			
			\fill [gray!20!white] (4.5,-1) rectangle (5.5,1);
			\draw[->, gray] (4.5,0) -- (6,0);
			\draw[->, gray, dashed] (5.5,-1) -- (5.5,1);
			
			\draw (5.75,.25) node (d5) {};
			\draw (7,.25) node (d6) {};
			\draw[->] (d5) to [bend left=45] (d6);
			\node [] at (6.45,.8) {$e^z$};
			
			\draw[fill=gray!20!white, dashed] (7.5,0) circle (.5);  
			\draw[->, gray] (6.5,0) -- (8.5,0);
			\draw[->, gray] (7.5,-1) -- (7.5,1);
			\end{tikzpicture}
			\caption{Map $f_s(z) = \exp\left(\displaystyle\frac{s(z+1)}{z-1}\right)$ for $s > 0$.}\label{fig: map1}
		\end{figure}
	\end{center}
	So $f_s$ maps $\D$ into $\D$, as depicted in Figure \ref{fig: map1}, and thus $f_s(z)$ is an element of $H^\infty$.
\end{proof}

\begin{lemma}\label{Lemma:log_bloch} For real value $t$, the function $f_t(z) = \left(\displaystyle\frac{1+z}{1-z}\right)^{it}$ is in $H^\infty$.
\end{lemma}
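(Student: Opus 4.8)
The plan is to mimic the geometric argument used for Lemma~\ref{Lemma:exp_bloch}, realizing $f_t$ as a composition of elementary conformal maps and tracking the image of $\D$ through each stage. If $t = 0$ then $f_t \equiv 1 \in H^\infty$, so assume $t \neq 0$. Interpreting the power through the principal logarithm, one writes $f_t(z) = \exp\!\left(it\,\mathrm{Log}\dfrac{1+z}{1-z}\right)$, and I would decompose this as: first $z \mapsto \frac{1+z}{1-z}$, mapping $\D$ onto $\H_r$; next $w \mapsto \mathrm{Log}\,w$, mapping $\H_r$ conformally onto the horizontal strip $S = \{\,|\mathrm{Im}\,w| < \pi/2\,\}$; next $w \mapsto itw$, mapping $S$ onto the vertical strip $\{\,|\mathrm{Re}\,w| < \pi|t|/2\,\}$; and finally $w \mapsto e^w$, mapping that strip into the annulus $\{\,e^{-\pi|t|/2} < |w| < e^{\pi|t|/2}\,\}$.

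The facts to check are that the second step is legitimate — the principal branch of the logarithm is single-valued and analytic on the simply connected domain $\H_r$, which omits the origin, and it carries $\H_r$ conformally onto $S$ — and that each of the remaining maps sends the indicated region where claimed; these are all standard properties of Möbius transformations, the logarithm, and the exponential. Composing the four maps shows that $f_t$ is analytic on $\D$ and takes values in the bounded annulus above, so $\|f_t\|_\infty \le e^{\pi|t|/2} < \infty$ and hence $f_t \in H^\infty$.

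I do not expect a genuine obstacle here; the only point requiring a moment's care is confirming that the whole chain is well-defined — in particular that $\frac{1+z}{1-z}$ never equals $0$ for $z \in \D$ (it cannot, since that would force $z = -1 \notin \D$), so that the orbit stays in $\H_r$, the principal logarithm is defined throughout, and the branch of the power in the statement is unambiguous. A figure in the style of Figure~\ref{fig: map1}, tracing $\D \to \H_r \to S \to$ (vertical strip) $\to$ (annulus), could be added to make the argument transparent.
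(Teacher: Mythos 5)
Your proof is correct and follows essentially the same route as the paper's: factoring $f_t$ through the map $z\mapsto\frac{1+z}{1-z}$ onto $\H_r$, then the principal logarithm, then multiplication by $it$, then the exponential, and reading off a bounded annulus as the image. In fact your identification of the image of $\H_r$ under the principal logarithm as the strip $\{|\mathrm{Im}\,w|<\pi/2\}$ is more precise than the strip used in the paper, and it yields the correct $t$-dependent bound $\|f_t\|_\infty \le e^{\pi|t|/2}$.
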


\begin{proof}
	For $t = 0$, $f_t$ is identically 1, and thus is in $H^\infty$.  Now suppose $t > 0$.  We will rewrite the function $f_t$ as $$f_t(z) = \exp\left(it\log\frac{1+z}{1-z}\right),$$ where $\log$ is the principle branch of the logarithm. Then $f_t$ is comprised of the functions
	\begin{enumerate}
		\item $z \mapsto \frac{1+z}{1-z}$; mapping $\D$ onto $\H_r$,
		\item $z \mapsto \log z$; mapping $H_r$ onto the horizontal strip $S_h = \{0 < \mathrm{Im}\,z < 2\pi \}$,
		\item $z \mapsto itz$; mapping $S_h$ onto the vertical strip $S_v = \{-2\pi  < \mathrm{Re}\, z < 0\}$,
		\item $z \mapsto e^z$; mapping $S_v$ into $A(e^{-2\pi},1) = \{e^{-2\pi} < |z| < 1\}$.
	\end{enumerate}
	
	\begin{center}
		\begin{figure}[h]
			\begin{tikzpicture}
			\draw[fill=gray!20!white, dashed] (0,0) circle (.5);  
			\draw[->, gray] (-1,0) -- (1,0);
			\draw[->, gray] (0,-1) -- (0,1);
			
			\draw (.7,.5) node (d1) {};
			\draw (1.8,.5) node (d2) {};
			\draw[->] (d1) to [bend left=45] (d2);
			\node [] at (1.275,1) {$\frac{1+z}{1-z}$};
			
			\fill [gray!20!white] (2,-1) rectangle (3,1);
			\draw[->, gray] (1.5,0) -- (3,0);
			\draw[->, gray, dashed] (2,-1) -- (2,1);
			
			\draw (3,.5) node (d3) {};
			\draw (4.1,.5) node (d4) {};
			\draw[->] (d3) to [bend left=45] (d4);
			\node [] at (3.55,1) {$\log z$};
			
			\fill [gray!20!white] (4,0) rectangle (6,.5);
			\draw[gray, dashed] (4,.5) -- (6,.5);
			\draw[->, gray, dashed] (4,0) -- (6,0);
			\draw[->, gray] (5,-1) -- (5,1);
			
			\fill [gray!20!white] (4,0) rectangle (6,.5);
			\draw[gray, dashed] (4,.5) -- (6,.5);
			\draw[->, gray, dashed] (4,0) -- (6,0);
			\draw[->, gray] (5,-1) -- (5,1);
			
			\draw (5.8,.5) node (d3) {};
			\draw (6.9,.5) node (d4) {};
			\draw[->] (d3) to [bend left=45] (d4);
			\node [] at (6.4,1) {$itz$};
			
			\fill [gray!20!white] (7,-1) rectangle (7.5,1);
			\draw[gray, dashed] (7,-1) -- (7,1);
			\draw[->, gray] (6.5,0) -- (8,0);
			\draw[->, gray, dashed] (7.5,-1) -- (7.5,1);
			
			\draw (7.8,.5) node (d3) {};
			\draw (8.9,.5) node (d4) {};
			\draw[->] (d3) to [bend left=45] (d4);
			\node [] at (8.4,1) {$e^z$};
			
			\draw[fill=gray!20!white, dashed] (9.5,0) circle (.5); 
			\draw[fill=white, dashed] (9.5,0) circle (.25); 
			\draw[->, gray] (8.5,0) -- (10.5,0);
			\draw[->, gray] (9.5,-1) -- (9.5,1);
			\end{tikzpicture}
			\caption{Map $f_t(z) = \exp\left(it\log\displaystyle\frac{1+z}{1-z}\right)$ for $t > 0$.}\label{fig: map2}
		\end{figure}
	\end{center}
	So $f_t$ maps $\D$ into $A(e^{-2\pi},1) \subseteq \D$, as depicted in Figure \ref{fig: map2}.  In the case of $t < 0$, the vertical strip $S_v$ becomes $\{0 < \mathrm{Re}\, z < 2\pi \}$.  The map $z \mapsto e^z$ takes $S_v$ into $A(1,e^{2\pi}) \subseteq e^{2\pi}\D$, as depicted in Figure \ref{fig: map3}.
	\begin{center}
		\begin{figure}[h]
			\begin{tikzpicture}
			\draw[fill=gray!20!white, dashed] (0,0) circle (.5);  
			\draw[->, gray] (-1,0) -- (1,0);
			\draw[->, gray] (0,-1) -- (0,1);
			
			\draw (.7,.5) node (d1) {};
			\draw (1.8,.5) node (d2) {};
			\draw[->] (d1) to [bend left=45] (d2);
			\node [] at (1.275,1) {$\frac{1+z}{1-z}$};
			
			\fill [gray!20!white] (2,-1) rectangle (3,1);
			\draw[->, gray] (1.5,0) -- (3,0);
			\draw[->, gray, dashed] (2,-1) -- (2,1);
			
			\draw (3,.5) node (d3) {};
			\draw (4.1,.5) node (d4) {};
			\draw[->] (d3) to [bend left=45] (d4);
			\node [] at (3.55,1) {$\log z$};
			
			\fill [gray!20!white] (4,0) rectangle (6,.5);
			\draw[gray, dashed] (4,.5) -- (6,.5);
			\draw[->, gray, dashed] (4,0) -- (6,0);
			\draw[->, gray] (5,-1) -- (5,1);
			
			\draw (5.8,.5) node (d3) {};
			\draw (6.9,.5) node (d4) {};
			\draw[->] (d3) to [bend left=45] (d4);
			\node [] at (6.4,1) {$itz$};
			
			\fill [gray!20!white] (7,-1) rectangle (7.5,1);
			\draw[gray, dashed] (7.5,-1) -- (7.5,1);	
			\draw[->, gray, dashed] (7,-1) -- (7,1);
			\draw[->, gray] (6.5,0) -- (8,0);
			
			\draw (7.8,.5) node (d3) {};
			\draw (8.9,.5) node (d4) {};
			\draw[->] (d3) to [bend left=45] (d4);
			\node [] at (8.4,1) {$e^z$};
			
			\draw[fill=gray!20!white, dashed] (9.5,0) circle (.75); 
			\draw[fill=white, dashed] (9.5,0) circle (.5); 
			\draw[->, gray] (8.5,0) -- (10.5,0);
			\draw[->, gray] (9.5,-1) -- (9.5,1);
			\end{tikzpicture}
			\caption{Map $f_t(z) = \exp\left(it\log\displaystyle\frac{1+z}{1-z}\right)$ for $t < 0$.}\label{fig: map3}
		\end{figure}
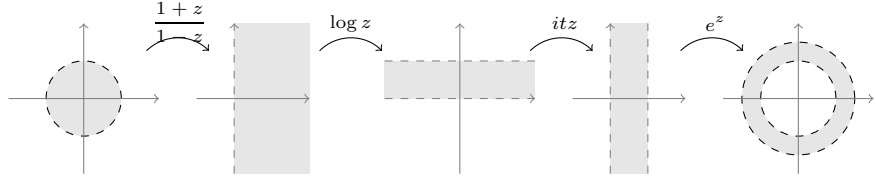
	\end{center}
	In either case, $f_t(z)$ is an element of $H^\infty$ since $\|f_t\|_\infty < e^{2\pi}$ for all $t \in \R$.
\end{proof}

These functions above, together with the monomials, play such a pivitol role in Section \ref{Section:Main} that we denote the union of these functions by $\mathcal{F}$, i.e. $$\mathcal{F} = \left\{f_s : s \geq 0\right\} \cup \left\{f_t : t \in \R \right\}\cup\left\{z^k : k \in \N\right\}.$$

\subsection{Spectrum of $C_\varphi$}\label{Subsection:Spectrum}
In this section we collect useful results regarding the spectrum of operators on Banach spaces.  For a bounded linear operator $T$ on a Banach space $X$, the spectrum of $T$ is given by $$\sigma(T) = \{\lambda \in \C : T-\lambda I \text{ is not invertible}\}$$ where $I$ denotes the identity operator on $X$.  The spectrum is a nonempty, closed subset of $\C$.  The spectral radius of $T$ is given by $$\rho(T) = \sup\,\{|\lambda| : \lambda \in \sigma(T)\}.$$  Due to the fact that the spectrum is closed, we have the spectrum of $T$ is contained in the closed disk centered at the origin of radius $\rho(T)$.  

Determining the spectrum of a particular composition operator can be difficult depending on the symbol of the operator and the space on which it is acting.  However, the difficulties can be avoided if the operator is similar to a ``simpler" operator.  Linear operators $S$ and $T$ (not necessarily bounded) on a Banach space $X$ are similar if there exists a bounded linear operator $U$ on $X$, having bounded inverse, such that $T = USU^{-1}$.  If $S$ and $T$ are both bounded operators, then similarity preserves the spectrum.

\begin{theorem}\label{Theorem:similar_spectrum} Let $S$ and $T$ be bounded operators on a Banach space $X$.  If $S$ and $T$ are similar, then $\sigma(S) = \sigma(T)$.
\end{theorem}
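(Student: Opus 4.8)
The plan is to reduce everything to the single observation that conjugation by a bounded invertible operator preserves invertibility. Concretely, I will show that for each $\lambda \in \C$, the operator $T - \lambda I$ is invertible (with bounded inverse) if and only if $S - \lambda I$ is invertible; since $\sigma(S)$ and $\sigma(T)$ are by definition the sets of $\lambda$ at which the respective operators fail to be invertible, this equivalence yields $\sigma(S) = \sigma(T)$ at once.

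First I would record the similarity hypothesis: there is a bounded linear operator $U$ on $X$ with bounded inverse $U^{-1}$ such that $T = USU^{-1}$. Fixing $\lambda \in \C$ and writing $\lambda I = U(\lambda I)U^{-1}$, I obtain the algebraic identity
$$T - \lambda I = USU^{-1} - U(\lambda I)U^{-1} = U(S - \lambda I)U^{-1}.$$
Next, assume $S - \lambda I$ is invertible with bounded inverse $(S - \lambda I)^{-1}$. Then $U(S - \lambda I)^{-1}U^{-1}$ is a composition of bounded operators, hence bounded, and a direct computation using the identity above shows
$$(T - \lambda I)\,U(S - \lambda I)^{-1}U^{-1} = U(S - \lambda I)U^{-1}\,U(S - \lambda I)^{-1}U^{-1} = I,$$
and likewise $U(S - \lambda I)^{-1}U^{-1}\,(T - \lambda I) = I$. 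Thus $T - \lambda I$ is invertible. Interchanging the roles of $S$ and $T$ (replacing $U$ by $U^{-1}$) gives the reverse implication, so $T - \lambda I$ is invertible precisely when $S - \lambda I$ is.

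Consequently $\lambda \in \sigma(T)$ if and only if $\lambda \in \sigma(S)$, which is the claim. There is no real obstacle in this argument; the only points deserving a word of care are that the candidate inverse $U(S - \lambda I)^{-1}U^{-1}$ is bounded — which holds because the bounded operators on $X$ form an algebra closed under composition — and that it is a genuine two-sided inverse, which is the short verification displayed above.
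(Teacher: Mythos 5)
Your proof is correct and follows the same route as the paper's: both establish the identity $T - \lambda I = U(S - \lambda I)U^{-1}$ and conclude that $T - \lambda I$ is invertible exactly when $S - \lambda I$ is. You merely spell out the verification of the two-sided inverse $U(S-\lambda I)^{-1}U^{-1}$, which the paper leaves implicit.
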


\begin{proof}
	Suppose $S$ and $T$ are similar operators on $X$. By definition, there exists an invertible, bounded operator $U$ such that $T = USU^{-1}$. Let $\lambda\in\C$ and observe that, 
	$$\begin{aligned}
	T-\lambda I &= USU^{-1}-\lambda I\\
	&= USU^{-1} - \lambda UU^{-1}\\
	&= USU^{-1} - U(\lambda I)U^{-1}\\
	&= U(S - \lambda I)U^{-1}.
	\end{aligned}$$  Thus, we have that $S-\lambda I$ is not invertible if and only if $T-\lambda I$ is not invertible.  Therefore $\sigma(S) = \sigma(T)$.
\end{proof}

\section{Main Results}\label{Section:Main}

In this section, we determine the spectrum of $C_\varphi$ for $\varphi$ a disk automorphism acting on a particular family of Banach spaces of analytic functions.  The spaces we consider will be denoted by $\mathcal{X}$ and have the following properties:
\begin{enumerate}
	\item[(i)] $\mathcal{X}$ contains $\mathcal{F}$,
	\item[(ii)] for all $\varphi \in \Aut$, $C_\varphi$ is bounded on $\mathcal{X}$ and $\rho(C_\varphi)=1$.
\end{enumerate} 
The set of automorphisms of $\D$, as seen previously, is a very nice subset of the analytic self-maps of $\D$.  By property (ii), every composition operator induced by a disk automorphism is bounded on $\mathcal{X}$.  In fact, every such composition operator is invertible.  This result, that we prove below, can be viewed as a consequence of Theorem 1.6 of \cite{CowenMacCluer:1995}.

\begin{proposition}\label{Theorem:inverse_automorphism} Let $\varphi$ be a disk automorphism and $C_{\varphi}$ the induced composition operator on $\mathcal{X}$. Then $C_{\varphi}$ is invertible with inverse $C^{-1}_{\varphi} = C_{\varphi^{-1}}$.
\end{proposition}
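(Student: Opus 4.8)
The plan is to exploit the algebraic identity between composition operators: for analytic self-maps $\varphi$ and $\psi$ of $\D$, one has $C_\psi C_\varphi = C_{\varphi\circ\psi}$, which follows directly from the definition since $(C_\psi C_\varphi)f = C_\psi(f\circ\varphi) = (f\circ\varphi)\circ\psi = f\circ(\varphi\circ\psi)$ for every $f \in \mathcal{X}$. First I would record that since $\varphi$ is a disk automorphism, its inverse $\varphi^{-1}$ is again a disk automorphism (it has the same form $\lambda(a-z)/(1-\overline a z)$ up to relabeling), so by property (ii) of $\mathcal{X}$ the operator $C_{\varphi^{-1}}$ is a bounded operator on $\mathcal{X}$. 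Thus $C_{\varphi^{-1}}$ is a legitimate candidate for a two-sided bounded inverse of $C_\varphi$.

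Next I would verify the two composition identities. Applying the identity above with $\psi = \varphi^{-1}$ gives $C_{\varphi^{-1}} C_\varphi = C_{\varphi\circ\varphi^{-1}} = C_{\mathrm{id}_\D}$, and with the roles reversed, $C_\varphi C_{\varphi^{-1}} = C_{\varphi^{-1}\circ\varphi} = C_{\mathrm{id}_\D}$. Since $C_{\mathrm{id}_\D} f = f \circ \mathrm{id}_\D = f$ for all $f$, we have $C_{\mathrm{id}_\D} = I$, the identity operator on $\mathcal{X}$. Hence $C_{\varphi^{-1}} C_\varphi = C_\varphi C_{\varphi^{-1}} = I$, which shows $C_\varphi$ is invertible with $C_\varphi^{-1} = C_{\varphi^{-1}}$.

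There is no real obstacle here; the only point requiring a sentence of care is the one flagged above, namely that $C_{\varphi^{-1}}$ is \emph{bounded}, so that the inverse we produce is a bounded operator and not merely an algebraic inverse. This is exactly what property (ii) of the family $\mathcal{X}$ guarantees, applied to the automorphism $\varphi^{-1}$. I would also remark that this gives, via Proposition~\ref{Theorem:inverse_automorphism} itself together with property (ii), that $\sigma(C_\varphi)$ is a compact subset of $\partial\D$: indeed $\rho(C_\varphi) = 1$ bounds the spectrum inside $\overline{\D}$, while $\rho(C_\varphi^{-1}) = \rho(C_{\varphi^{-1}}) = 1$ forces every spectral value to have modulus at least $1$ (if $0 \ne \lambda \in \sigma(C_\varphi)$ then $\lambda^{-1} \in \sigma(C_\varphi^{-1})$, and $0 \notin \sigma(C_\varphi)$ since $C_\varphi$ is invertible), so $\sigma(C_\varphi) \subseteq \partial\D$. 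This last observation, although perhaps deferred to a corollary, is the reason the proposition is placed where it is.
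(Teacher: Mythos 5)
Your proof is correct and follows essentially the same route as the paper's: observe that $\varphi^{-1}$ is again an automorphism so $C_{\varphi^{-1}}$ is bounded by property (ii), then verify directly that $C_\varphi C_{\varphi^{-1}}$ and $C_{\varphi^{-1}}C_\varphi$ both act as the identity. Packaging the computation as the general identity $C_\psi C_\varphi = C_{\varphi\circ\psi}$ is a cosmetic difference only, and your closing remark about $\sigma(C_\varphi)\subseteq\partial\D$ correctly anticipates the paper's Theorem~\ref{Theorem:automorphism_spectrum_unitcircle}.
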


\begin{proof}
	Since $\varphi \in \text{Aut}(\D)$, $\varphi$ is invertible, and $\varphi^{-1}$ is an automorphism. The composition operator $C_{\varphi^{-1}}$ is bounded by property (ii) and 
	$$C_{\varphi} \left( C_{\varphi^{-1}}(f) \right) = C_{\varphi}( f \circ \varphi^{-1}) = f \circ \varphi^{-1} \circ \varphi = f$$
	$$C_{\varphi^{-1}} \left( C_{\varphi}(f) \right) = C_{\varphi^{-1}}( f \circ \varphi) = f \circ \varphi \circ \varphi^{-1} = f \ .$$
	Therefore, $C_{\varphi}$ is invertible with $C_{\varphi}^{-1} = C_{\varphi^{-1}}$. 
\end{proof}

Since the spectral radius of $C_\varphi$ on $\mathcal{X}$ is 1 for $\varphi \in \Aut$, we see that the search for the spectrum can be restricted to subsets of $\overline{\D}$.  However, our search can be refined further to subsets of the unit circle.

\begin{theorem}\label{Theorem:automorphism_spectrum_unitcircle} Let $\varphi$ be a a disk automorphism and $C_{\varphi}$ the induced composition operator on $\mathcal{X}$.  Then $\sigma(C_\varphi) \subseteq \partial\D$.
\end{theorem}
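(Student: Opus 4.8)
The plan is to leverage the two facts already in hand: by Proposition~\ref{Theorem:inverse_automorphism} the operator $C_\varphi$ is invertible with $C_\varphi^{-1}=C_{\varphi^{-1}}$, and by property~(ii) of $\mathcal{X}$ every composition operator induced by a disk automorphism has spectral radius $1$. The key observation is that $\varphi^{-1}$ is itself a disk automorphism, so property~(ii) applies to $C_{\varphi^{-1}}$ as well: $\rho(C_{\varphi^{-1}})=1$, and hence $\sigma(C_{\varphi^{-1}})\subseteq\overline{\D}$.

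Next I would combine these using the spectral mapping behavior of inversion. Since $C_\varphi$ is invertible, $0\notin\sigma(C_\varphi)$, so every $\lambda\in\sigma(C_\varphi)$ is nonzero. For such $\lambda$ I would use the factorization
$$C_{\varphi^{-1}}-\lambda^{-1}I = -\lambda^{-1}C_{\varphi^{-1}}\bigl(C_\varphi-\lambda I\bigr),$$
which one checks by multiplying out and using $C_{\varphi^{-1}}C_\varphi=I$. Because $C_{\varphi^{-1}}$ is invertible and $-\lambda^{-1}\neq 0$, the left side is invertible if and only if $C_\varphi-\lambda I$ is; thus $\lambda\in\sigma(C_\varphi)$ forces $\lambda^{-1}\in\sigma(C_{\varphi^{-1}})$.

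Finally I would assemble the inequalities. From $\rho(C_\varphi)=1$ we get $|\lambda|\le 1$ for every $\lambda\in\sigma(C_\varphi)$, and from $\lambda^{-1}\in\sigma(C_{\varphi^{-1}})\subseteq\overline{\D}$ we get $|\lambda^{-1}|\le 1$, i.e.\ $|\lambda|\ge 1$. Hence $|\lambda|=1$, so $\sigma(C_\varphi)\subseteq\partial\D$.

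There is no serious obstacle here; the argument is essentially formal once invertibility and $\rho=1$ are available. The only point requiring a moment of care is justifying that $\lambda\neq 0$ entails $\lambda^{-1}\in\sigma(C_{\varphi^{-1}})$ — i.e.\ recording the spectral mapping statement for the map $z\mapsto z^{-1}$ — which the displayed factorization handles directly without invoking any external machinery.
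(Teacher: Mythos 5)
Your proof is correct and follows essentially the same route as the paper: both arguments combine $\rho(C_\varphi)=\rho(C_{\varphi^{-1}})=1$ with the invertibility $C_\varphi^{-1}=C_{\varphi^{-1}}$ to conclude that every $\lambda\in\sigma(C_\varphi)$ has both $\lambda$ and $\lambda^{-1}$ in $\overline{\D}$. The only (cosmetic) difference is that the paper cites the Spectral Mapping Theorem for $z\mapsto z^{-1}$, whereas you verify the needed implication directly via the factorization $C_{\varphi^{-1}}-\lambda^{-1}I=-\lambda^{-1}C_{\varphi^{-1}}\bigl(C_\varphi-\lambda I\bigr)$, which makes the step self-contained.
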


\begin{proof}
	By property (ii) of $\mathcal{X}$, we have $\rho(C_{\varphi}) = 1$. So, $\sigma(C_{\varphi}) \subseteq \overline{\D}$. Since, by Proposition \ref{Theorem:inverse_automorphism}, $C_{\varphi}$ is invertible with the inverse $C^{-1}_{\varphi} = C_{\varphi^{-1}}$, then $0\notin \sigma(C_{\varphi})$. So, the function $f(z) = z^{-1}$ is analytic in some neighborhood of $\sigma(C_{\varphi})$. By the Spectral Mapping Theorem (see Theorem 5.14 of \cite{MacCluer:2009}), we have $\sigma(f\circ C_{\varphi}) = f(\sigma(C_{\varphi}))$, and so,
	$$\sigma(C_{\varphi^{-1}}) = \sigma(C^{-1}_{\varphi}) = \sigma(C_{\varphi})^{-1} = \{\lambda^{-1}: \lambda \in \sigma(C_{\varphi})\}\ .$$
	Since $\varphi^{-1} \in \Aut$, $\sigma(C_{\varphi^{-1}}) \subseteq \overline{\D}.$  Thus for $\lambda \in \sigma(C_\varphi)$, both $\lambda$ and $\lambda^{-1}$ are in $\overline{\D}$.  This implies $\lambda \in \partial\D$.  So $\sigma(C_\varphi) \subseteq \partial\D$, as desired. 
\end{proof}

Since the disk automorphisms are classified into three categories, according to fixed points, we will treat each type of automorphism separately.  However, the strategy to determine $\sigma(C_\varphi)$ is the same.  For a disk automorphism $\varphi$, we have shown $\varphi$ to be conformally equivalent to a particularly ``nice" disk automorphism: in the elliptic case a disk automorphism that fixes 0, in the parabolic case a disk automorphism that fixes 1, and in the hyperbolic case a disk automorphism that fixes $\pm 1$.  In the next result, we show that conformally equivalent automorphisms induce similar composition operators on $\mathcal{X}$.  This result is not unique to the space $\mathcal{X}$, but is true for any space for which automorphisms induce bounded composition operators (see pg. 250 of \cite{CowenMacCluer:1995}).

\begin{proposition}\label{Theorem:confequiv_similar} Let $\varphi$ and $\psi$ be conformally equivalent disk automorphisms.  Then the induced composition operators $C_\varphi$ and $C_\psi$ on $\mathcal{X}$ are similar.
\end{proposition}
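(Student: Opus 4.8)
The plan is to use the composition operator induced by $\tau$ itself as the intertwining operator. By the definition of conformal equivalence there is some $\tau \in \Aut$ with $\psi = \tau\circ\varphi\circ\tau^{-1}$, and I would take $U = C_{\tau^{-1}}$. Property (ii) of $\mathcal{X}$ guarantees that $U$ is a bounded operator on $\mathcal{X}$, and Proposition \ref{Theorem:inverse_automorphism} gives that $U$ is invertible with bounded inverse $U^{-1} = C_\tau$. Thus the entire content of the statement reduces to verifying the similarity relation $C_\psi = U C_\varphi U^{-1}$.

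For that I would first record the elementary composition law for composition operators: for analytic self-maps $\alpha,\beta$ of $\D$ and $f\in\mathcal{X}$, one has $C_\alpha C_\beta f = (f\circ\beta)\circ\alpha = C_{\beta\circ\alpha}f$, so $C_\alpha C_\beta = C_{\beta\circ\alpha}$. Applying this twice,
$$U C_\varphi U^{-1} = C_{\tau^{-1}}\,C_\varphi\,C_\tau = C_{\tau^{-1}}\,C_{\tau\circ\varphi} = C_{(\tau\circ\varphi)\circ\tau^{-1}} = C_{\tau\circ\varphi\circ\tau^{-1}} = C_\psi .$$
Since $U$ is bounded with bounded inverse, this exhibits $C_\varphi$ and $C_\psi$ as similar operators on $\mathcal{X}$, which completes the argument.

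There is no substantive obstacle here; the single point that requires care is the direction of composition, since $\varphi\mapsto C_\varphi$ is contravariant. Choosing $U = C_\tau$ instead of $U = C_{\tau^{-1}}$ would produce $C_{\tau^{-1}\circ\varphi\circ\tau}$, the operator of an automorphism conformally equivalent to $\varphi$ but not literally equal to $\psi$; taking the \emph{inverse} map in the definition of $U$ is exactly what makes the computation land on $C_\psi$. Finally, I would remark that this argument uses only that $C_\tau$ and $C_{\tau^{-1}}$ are bounded on $\mathcal{X}$, so the conclusion holds verbatim on any space on which disk automorphisms induce bounded composition operators.
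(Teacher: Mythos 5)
Your proposal is correct and matches the paper's argument: both take $U = C_{\tau^{-1}}$ and verify $C_\psi = C_{\tau^{-1}} C_\varphi C_\tau$, the paper by evaluating directly on a function $f$ and you by invoking the contravariant composition law $C_\alpha C_\beta = C_{\beta\circ\alpha}$. The care you take with the direction of composition is exactly the point where the computation could go wrong, and you handle it correctly.
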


\begin{proof}
	Suppose $\varphi$ and $\psi$ are conformally equivalent disk automorphisms.  Then there exists a disk automorphism $\tau$ such that $\psi = \tau\circ\varphi\circ\tau^{-1}$.  For $f \in \mathcal{X}$, observe
	$$C_\psi f = f\circ(\tau\circ\varphi\circ\tau^{-1}) = ((f\circ\tau)\circ\varphi)\circ\tau^{-1} = (C_{\tau^{-1}} C_\varphi C_{\tau})f.$$  Since $C_{\tau^{-1}}$ is bounded and invertible on $\mathcal{X}$ with $C_{\tau^{-1}}^{-1} = C_{\tau}$, then $C_\psi = C_{\tau^{-1}} C_\varphi C_{\tau^{-1}}^{-1}$.  Therefore $C_\varphi$ and $C_\psi$ are similar.
\end{proof}

With Proposiition \ref{Theorem:confequiv_similar} and Lemmas \ref{Lemma:elliptic_confequiv}, \ref{Lemma:parabolic_confequiv}, and \ref{Lemma:hyperbolic_confequiv}, it suffices to determine the spectrum of the  ``nice" disk automorphisms, since similarity of bounded operators preserves the spectrum.

\begin{theorem}\label{Theorem:elliptic_spectrum} Let $\varphi$ be an elliptic disk automorphism with fixed point $a$ in $\D$.  Then the spectrum of $C_\varphi$ acting on $\mathcal{X}$ is the closure of the positive powers of $\varphi'(a)$.  Moreover, this closure is a finite subgroup of the unit circle if $\varphi'(a)^n = 1$ for some natural number $n$, and is the unit circle otherwise.
\end{theorem}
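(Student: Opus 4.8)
The plan is to reduce to the model rotation and then split into two cases according to whether $\varphi'(a)$ is a root of unity. By Lemma~\ref{Lemma:elliptic_confequiv}, $\varphi$ is conformally equivalent to $\psi(z) = \lambda z$ with $\lambda = \varphi'(a)$ a unimodular constant; Proposition~\ref{Theorem:confequiv_similar} then gives that $C_\varphi$ and $C_\psi$ are similar on $\mathcal{X}$, and Theorem~\ref{Theorem:similar_spectrum} yields $\sigma(C_\varphi) = \sigma(C_\psi)$. So it suffices to compute $\sigma(C_\psi)$ for $\psi(z) = \lambda z$.

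First I would establish the lower bound. Since $\mathcal{X}$ contains $\mathcal{F}$ (property (i)), it contains every monomial $z^k$, and a direct computation gives $C_\psi(z^k) = \lambda^k z^k$; together with the constant function $1 \in \mathcal{F}$, this shows every power $\lambda^k$ ($k \ge 0$) is an eigenvalue of $C_\psi$. Hence $\{\lambda^k : k \in \N\} \subseteq \sigma(C_\psi)$, and since the spectrum is closed, $\overline{\{\lambda^k : k \in \N\}} \subseteq \sigma(C_\psi)$. On the other hand, Theorem~\ref{Theorem:automorphism_spectrum_unitcircle} gives $\sigma(C_\psi) \subseteq \partial\D$.

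Now split on the order of $\lambda$. If $\varphi'(a)^n = 1$ for some $n \in \N$, let $m$ be the least such $n$; then the $m$-th iterate of $\psi$ is the identity map, so $C_\psi^{\,m} = I$, i.e.\ the polynomial $p(z) = z^m - 1$ annihilates $C_\psi$. By the Spectral Mapping Theorem, $\sigma(C_\psi) \subseteq \{\mu \in \C : \mu^m = 1\}$, which is exactly the cyclic group $\langle\lambda\rangle$ of order $m$. Combined with the lower bound $\{\lambda, \lambda^2, \dots, \lambda^m\} \subseteq \sigma(C_\psi)$, this gives $\sigma(C_\psi) = \langle\lambda\rangle$, a finite cyclic subgroup of $\partial\D$ that coincides with the closure of $\{\lambda^k : k \in \N\}$. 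If instead $\lambda$ is not a root of unity, then the orbit $\{\lambda^k : k \in \N\}$ is dense in $\partial\D$ (the classical density of irrational rotations, provable by a pigeonhole argument), so $\overline{\{\lambda^k : k \in \N\}} = \partial\D$; together with $\partial\D \supseteq \sigma(C_\psi) \supseteq \overline{\{\lambda^k : k \in \N\}}$ this forces $\sigma(C_\psi) = \partial\D$. Transporting back through the similarity completes the proof.

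The argument is mostly bookkeeping once the reduction to $\psi(z) = \lambda z$ and the eigenvalue computation are in hand. The one point that requires care is the finite-order case: one must work with the \emph{minimal} $n$ satisfying $\varphi'(a)^n = 1$, so that the annihilating polynomial $z^m - 1$ has root set precisely $\langle\lambda\rangle$ and the upper bound matches the claimed ``closure of the positive powers of $\varphi'(a)$'' rather than some larger group of roots of unity. The density statement for irrational rotations is standard and can simply be cited or dispatched in a line.
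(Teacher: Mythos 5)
Your proof is correct. The reduction to $\psi(z)=\lambda z$ via Lemma~\ref{Lemma:elliptic_confequiv} and Proposition~\ref{Theorem:confequiv_similar}, the monomial eigenfunctions giving $\overline{\{\lambda^k : k\in\N\}}\subseteq\sigma(C_\psi)$, and the density argument when $\lambda$ is not a root of unity all match the paper. Where you genuinely diverge is the upper bound in the finite-order case: the paper shows $C_\psi-\mu I$ is invertible for $\mu^m\neq 1$ by iterating the equation $f\circ\psi-\mu f=g$ into an $m\times m$ cyclic linear system with determinant $(-1)^m(\mu^m-1)$ and solving for $f$ explicitly as a linear combination of the functions $g\circ\psi^{(j)}$, whereas you observe that $\psi^{(m)}=\mathrm{id}$ forces $C_\psi^m=I$ and invoke the polynomial spectral mapping theorem to conclude $\sigma(C_\psi)\subseteq\{\mu:\mu^m=1\}=\langle\lambda\rangle$, using minimality of $m$ so that $\lambda$ is a primitive $m$-th root of unity and the two bounds meet. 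The arguments are equivalent in substance --- solving the paper's system amounts to dividing $z^m-1$ by $z-\mu$ and evaluating the quotient at $C_\psi$ --- but yours is shorter and avoids the matrix computation, at the cost of citing the spectral mapping theorem rather than exhibiting the inverse; the paper's version has the minor virtue of displaying that $(C_\psi-\mu I)^{-1}g$ lies in $\mathcal{X}$ because it is a finite linear combination of the $g\circ\psi^{(j)}$. Your flagged caveat about working with the minimal $m$ is exactly the right point of care.
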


\begin{proof} By Lemma \ref{Lemma:elliptic_confequiv}, $\varphi$ is conformally equivalent to $\psi(z) = \lambda z$ where $\lambda = \varphi'(a)$.  By Proposition \ref{Theorem:confequiv_similar}, it suffices to show that $\sigma(C_\psi)$ is the closure of the positive powers of $\lambda$.  Let $G = \langle\lambda\rangle = \{\lambda^k : k \in \N\}$, which is a subset of $\partial\D$ since $|\lambda| = 1$.  For each $k \in \N$, the function $f_k(z) = z^k$ is in $\mathcal{X}$ by property (i), and we have $(C_\psi f_k)(z) = \lambda^kf_k(z)$.  Thus $\lambda^k$ is an eigenvalue of $C_\psi$ corresponding to the eigenfunction $f_k$.  So $G \subseteq \sigma(C_\psi)$, and since the spectrum is closed, we have $\overline{G} \subseteq \sigma(C_\psi) = \sigma(C_\varphi)$.  If the order of $\lambda$ is infinite, then $G$ is dense in $\partial \D$, and so $\overline{G} = \partial\D$.
	
Now suppose $\lambda$ has order $m < \infty$.  Then $G = \{\lambda^k : k = 1, \dots, m\}$.  So, $\overline{G} = G$.  We now wish to show $\sigma(C_\psi) \subseteq \overline{G}$.  Since $\sigma(C_\psi) \subseteq \partial\D$ by Theorem \ref{Theorem:automorphism_spectrum_unitcircle}, it suffices to show that if $\mu \in \partial\D\setminus\overline{G}$ then $\mu\notin \sigma(C_\psi)$.  Suppose $\mu \in \partial\D\setminus\overline{G}$.
	
Since $\mu \notin \overline{G}$, it clear that $\mu \notin G$ and $\mu^m \neq 1$.  In order to show $\mu \notin \sigma(C_{\psi})$, we will show that $C_{\psi}-\mu I$ is invertible by proving that for every $g\in \mathcal{X}$, there exists a unique $f\in \mathcal{X}$ such that $f\circ \psi - \mu f = g$.
	
Since the order of $\lambda$ is $m$, we have $\psi^{(m)}(z) = (\underbrace{\psi \circ \dots \circ \psi}_{m\mathrm{-times}})(z)= \lambda^m z = z.$  By repeated composition with $\psi$, we obtain the system of linear equations:
	$$\begin{aligned}
	f\circ\psi 	- \mu f 					&= g\\
	f\circ\psi^{(2)} 	- \mu (f\circ\psi) 			&= g\circ\psi\\
	&\;\,\vdots \\
	f 					- \mu (f\circ\psi^{(m-1)}) 	&= g\circ\psi^{(m-1)} \ .
	\end{aligned}$$ 
	This system of linear equations can be expressed as the matrix equation $A\vec{x}=\vec{b}$ where
	$$A = \left[ \begin{matrix} 
	-\mu & 1 & 0 & 0 & \cdots & 0\\
	0 	& -\mu & 1 & 0 &\cdots & 0\\
	\vdots  & 0 &\ddots & \ddots & &\vdots\\
	\vdots  & \vdots &\ddots & \ddots &\ddots &\vdots\\
	0 	& \vdots &  &\ddots &\ddots & 1\\
	1 & 0 & \cdots & \cdots & 0 & -\mu\\
	\end{matrix}\right],
	\vec{x} = \left[\begin{matrix}f\\f\circ\psi\\\vdots\\\vdots\\f\circ\psi^{(m-2)}\\f\circ\psi^{(m-1)}\end{matrix}\right], \text{ and } 
	\vec{b} = \left[\begin{matrix}g\\g\circ\psi\\\vdots\\\vdots\\g\circ\psi^{(m-2)}\\g\circ\psi^{(m-1)}\end{matrix}\right].$$
	The determinant of $A$ is $(-1)^m(\mu^m-1)$, which is not zero since $\mu \notin G$. Thus there is a unique solution for $\vec{x}$. It gives us the unique solution $f$, which is a finite linear combination of function in $\mathcal{X}$ of the form $g\circ \psi^{(j-1)}$ for $j = 1, \dots, m$, and thus $f$ is in $\mathcal{X}$. It follows that $C_{\psi}-\mu I$ is invertible. So, $\mu\notin \sigma(C_{\psi})$. Therefore, $\sigma(C_{\varphi}) = \sigma(C_\psi) \subseteq \overline{G}$.
\end{proof}

\begin{theorem}\label{Theorem:parabolic_spectrum} Let $\varphi$ be a parabolic disk automorphism.  Then the spectrum of $C_\varphi$ acting on $\mathcal{X}$ is the unit circle.
\end{theorem}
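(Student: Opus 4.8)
The plan is to reduce to the two model automorphisms of Lemma~\ref{Lemma:parabolic_confequiv} and then to exhibit, inside $\mathcal{X}$, enough eigenvectors of the composition operator to exhaust the whole unit circle. First I would invoke Lemma~\ref{Lemma:parabolic_confequiv}: $\varphi$ is conformally equivalent to $\psi_1(z)=\frac{(1+i)z-1}{z+i-1}$ or to $\psi_2(z)=\frac{(1-i)z-1}{z-i-1}$, both parabolic with fixed point $1$. By Proposition~\ref{Theorem:confequiv_similar} the operators $C_\varphi$ and $C_{\psi_j}$ are similar on $\mathcal{X}$, so by Theorem~\ref{Theorem:similar_spectrum} we have $\sigma(C_\varphi)=\sigma(C_{\psi_j})$, and it suffices to compute $\sigma(C_{\psi_j})$ for the appropriate $j\in\{1,2\}$.

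The crux is the observation that the map $z\mapsto\frac{z+1}{z-1}$ (which sends $\D$ onto $\H_\ell$, as in Lemma~\ref{Lemma:exp_bloch}) conjugates each $\psi_j$ to a translation of the half-plane; concretely, a short algebraic computation yields the identity
$$\frac{\psi_j(z)+1}{\psi_j(z)-1}=\frac{z+1}{z-1}\mp 2i\qquad(j=1,2).$$
Hence, for every $s\ge 0$, the function $f_s(z)=\exp\!\left(\frac{s(z+1)}{z-1}\right)$ from the family $\mathcal{F}$ satisfies
$$C_{\psi_j}f_s=f_s\circ\psi_j=e^{\mp 2is}\,f_s.$$
Since $f_s\in\mathcal{F}\subseteq\mathcal{X}$ by property~(i) and $f_s$ is a nonzero function, each scalar $e^{\mp 2is}$ is an eigenvalue of $C_{\psi_j}$, and therefore lies in $\sigma(C_{\psi_j})$.

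To conclude, I would note that as $s$ runs over $[0,\infty)$ the values $e^{\mp 2is}$ run over all of $\partial\D$ (a dense subset would already suffice, since the spectrum is closed), so $\partial\D\subseteq\sigma(C_{\psi_j})$. The reverse containment $\sigma(C_{\psi_j})\subseteq\partial\D$ is exactly Theorem~\ref{Theorem:automorphism_spectrum_unitcircle}. Thus $\sigma(C_{\psi_j})=\partial\D$, and by the first paragraph $\sigma(C_\varphi)=\partial\D$.

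There is no real conceptual obstacle here: the parabolic case is genuinely the easiest of the three, because a translation of a half-plane already possesses a continuum of bounded exponential eigenfunctions, namely the $f_s$, which were placed in $\mathcal{F}$ precisely for this purpose, so unlike the elliptic case no surjectivity argument for $C_{\psi_j}-\mu I$ is needed. The only thing requiring care is the verification of the conjugating identity $\frac{\psi_j(z)+1}{\psi_j(z)-1}=\frac{z+1}{z-1}\mp 2i$, and in particular keeping the two sign conventions (one for $\psi_1$, one for $\psi_2$) straight — but each reduces to a one-line partial-fraction computation after clearing denominators.
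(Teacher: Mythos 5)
Your proposal is correct and follows essentially the same route as the paper: reduce to $\psi_1$ or $\psi_2$ via Lemma~\ref{Lemma:parabolic_confequiv} and Proposition~\ref{Theorem:confequiv_similar}, then show the functions $f_s\in\mathcal{F}$ are eigenfunctions of $C_{\psi_j}$ with eigenvalues $e^{\mp 2is}$ filling out $\partial\D$, and finish with Theorem~\ref{Theorem:automorphism_spectrum_unitcircle}. Your packaging of the computation as the conjugation identity $\frac{\psi_j(z)+1}{\psi_j(z)-1}=\frac{z+1}{z-1}\mp 2i$ is just a cleaner way of writing the same algebra the paper carries out directly.
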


\begin{proof}  From Lemma \ref{Lemma:parabolic_confequiv}, $\varphi$ is conformally equivalent to either $\psi_1(z) = \displaystyle\frac{(1+i)z-1}{z+i-1}$ or $\psi_2(z) = \displaystyle\frac{(1-i)z-1}{z-i-1}$.  By Theorems \ref{Theorem:confequiv_similar} and \ref{Theorem:automorphism_spectrum_unitcircle} it suffices to show that $\partial\D$ is a subset of $\sigma(C_{\psi_1})$ and $\sigma(C_{\psi_2})$.
	
	First suppose $\varphi$ is conformally equivalent to $\psi_1$.  Consider the function $$f_s(z) = \exp\left(\displaystyle\frac{s(z+1)}{z-1}\right)$$
	for $s\geq 0$.  By property (i), $f_s$ is in $\mathcal{X}$. Observe 
	$$\begin{aligned}
	(C_{\psi_1} f_s)(z) &= f_s(\psi_1(z)) = f_s\left( \frac{(1+i)z-1}{z+i-1} \right)\\
	&= \exp \left( \frac{s \left( \frac{(1+i)z-1}{z+i-1} +1 \right)}{\frac{(1+i)z-1}{z+i-1} - 1}\right) = \exp \left( \frac{s((1+i)z-1+z+i-1)}{(1+i)z-1-z-i+1}\right)\\
	&= \exp \left( \frac{s((2+i)z+i-2)}{i(z-1)}\right) = \exp \left( \frac{s((1-2i)z+1+2i)}{z-1}\right)\\
	&= \exp \left( \frac{s(z+1)}{z-1} - 2is \right)\\
	&= e^{i(-2s)}f_s(z).
	\end{aligned}$$
	So, $f_s$ is an eigenfunction of $C_{\psi_1}$ for $s\geq 0$. Then, $\partial\D = \{e^{i(-2s)}: s\geq 0\}$ is a subset of $\sigma(C_{\psi_1})$.  If $\varphi$ is conformally equivalent to $\psi_2$, then by a similar calculation, we have $$(C_{\psi_2} f_s)(z) = e^{2is}f_s(z),$$ and so $\partial\D = \{e^{2is}:s\geq 0\}$ is a subset of $\sigma(C_{\psi_2})$.  Therefore, $\sigma(C_\varphi) = \partial\D$, as desired.
\end{proof}

\begin{theorem}\label{Theorem:hyperbolic_spectrum} Let $\varphi$ be a hyperbolic disk automorphism.  Then the spectrum of $C_\varphi$ acting on $\mathcal{X}$ is the unit circle.
\end{theorem}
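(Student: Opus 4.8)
The plan is to mirror the proof of Theorem \ref{Theorem:parabolic_spectrum}. By Lemma \ref{Lemma:hyperbolic_confequiv}, $\varphi$ is conformally equivalent to $\psi(z) = \frac{z+r}{1+rz}$ for some $r \in (0,1)$, so by Proposition \ref{Theorem:confequiv_similar} and Theorem \ref{Theorem:automorphism_spectrum_unitcircle} it suffices to show that $\partial\D \subseteq \sigma(C_\psi)$. Since this $\psi$ fixes $\pm 1$ (the boundary analogue of the elliptic situation where $0$ is fixed), the natural eigenfunction candidates are the functions $f_t(z) = \left(\frac{1+z}{1-z}\right)^{it}$ for $t \in \R$, which lie in $\mathcal{X}$ by property (i) together with Lemma \ref{Lemma:log_bloch}.

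The next step is to compute $C_\psi f_t$. The key elementary identity is $\frac{1+\psi(z)}{1-\psi(z)} = \frac{1+r}{1-r}\cdot\frac{1+z}{1-z}$, obtained by clearing denominators in $\psi$ and factoring numerator and denominator. Because $\frac{1+r}{1-r}$ is a positive real constant and $\frac{1+z}{1-z}$ maps $\D$ into the right half plane $\H_r$, the principal logarithm is additive on this product, so $(C_\psi f_t)(z) = \left(\frac{1+r}{1-r}\right)^{it} f_t(z) = e^{it\log\frac{1+r}{1-r}} f_t(z)$, where $\log$ denotes the real logarithm. Hence each unimodular number $e^{it\log\frac{1+r}{1-r}}$ is an eigenvalue of $C_\psi$ with eigenfunction $f_t$.

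Finally, since $r \in (0,1)$, the number $\log\frac{1+r}{1-r}$ is a fixed positive real, so as $t$ ranges over $\R$ the exponent $t\log\frac{1+r}{1-r}$ ranges over all of $\R$ and $e^{it\log\frac{1+r}{1-r}}$ traces out the entire unit circle. Therefore $\partial\D \subseteq \sigma(C_\psi)$, and combined with Theorem \ref{Theorem:automorphism_spectrum_unitcircle} this gives $\sigma(C_\psi) = \partial\D$; by Proposition \ref{Theorem:confequiv_similar} and Theorem \ref{Theorem:similar_spectrum}, $\sigma(C_\varphi) = \partial\D$ as well.

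The one point requiring care — the main obstacle, modest as it is — is the branch-cut bookkeeping: one must verify that $\log(cw) = \log c + \log w$ for the principal branch when $c > 0$ and $\mathrm{Re}\, w > 0$, so that the computation of $C_\psi f_t$ produces a genuine scalar multiple of $f_t$ rather than an extraneous additive multiple of $2\pi i$ in the exponent. This follows immediately from the observation that $\arg(cw) = \arg w \in (-\pi/2, \pi/2)$, which keeps everything inside the domain of the principal logarithm.
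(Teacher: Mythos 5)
Your proof is correct and follows essentially the same route as the paper: reduce to $\psi(z)=\frac{z+r}{1+rz}$ via conformal equivalence, exhibit the functions $f_t$ as eigenfunctions via the identity $\frac{1+\psi(z)}{1-\psi(z)}=\frac{1+r}{1-r}\cdot\frac{1+z}{1-z}$, and note that the eigenvalues $e^{it\log\frac{1+r}{1-r}}$ sweep out all of $\partial\D$ as $t$ ranges over $\R$. Your branch-cut bookkeeping is in fact slightly more careful than the paper's, which records the eigenvalue as $\left(\frac{r+1}{r-1}\right)^{it}$, a power of a negative real, rather than the unambiguous $\left(\frac{1+r}{1-r}\right)^{it}$.
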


\begin{proof}
	From Lemma \ref{Lemma:hyperbolic_confequiv}, $\varphi$ is conformally equivalent to $\psi(z) = \displaystyle\frac{z+r}{1+rz}$ for some $r \in (0,1)$.  By Theorems \ref{Theorem:confequiv_similar} and \ref{Theorem:automorphism_spectrum_unitcircle} it suffices to show that $\partial\D \subseteq\sigma(C_\psi)$. Consider the function $$f_t(z) = \left(\frac{1+z}{1-z}\right)^{it}$$ for $t \in \R$.  By property (i), $f_t$ is in $\mathcal{X}$.  Observe
	$$\begin{aligned}
	(C_{\psi} f_t)(z) &= f_t(\psi(z)) = f_t\left(\frac{z+r}{1+rz}\right)\\
	&= \left(\frac{1+\frac{z+r}{1+rz}}{1-\frac{z+r}{1+rz}}\right)^{it} = \left(\frac{1+rz+z+r}{1+rz-z-r}\right)^{it}\\
	&= \left(\frac{(r+1)z+(r+1)}{(r-1)z-(r-1)}\right)^{it}\\
	&= \left(\frac{r+1}{r-1}\right)^{it}f_t(z).
	\end{aligned}$$  So, $f_t$ is an eigenfunction of $C_{\psi}$ for $t$ real.  Then $\partial\D = \left\{\left(\frac{r+1}{r-1}\right)^{it}: 0 < r < 1, t \in \R\right\}$ is a subset of $\sigma(C_{\psi}) = \sigma(C_\varphi)$.  Therefore $\sigma(C_\varphi) = \partial\D$, as desired.
\end{proof}

\section{Examples \& Comparisons}
In this section we first consider examples of spaces that satisfy the properties of $\mathcal{X}$.  For these spaces, our results characterize the spectrum of composition operators induced by disk automorphisms.  Lastly, we consider spaces that do not satisfy the properties of $\mathcal{X}$ but for which the spectrum of composition operators induced by automorphisms is known.  We will compare the spectra for those spaces with the characterization for $\mathcal{X}$.

\subsection{Examples}\label{Section:Examples}
First, we will discuss examples of spaces that satisfy the properties of $\mathcal{X}$.
\subsubsection{Bounded analytic functions}
The property (i) of $\mathcal{X}$ is satisfied by $H^\infty$ by Lemmas \ref{Lemma:exp_bloch} and \ref{Lemma:log_bloch}.  In fact, on $H^\infty$, any analytic self-map of $\D$ induces a bounded composition operator $C_\varphi$ such that $\|C_\varphi\| = 1$.  Equality is achieved since $H^\infty$ contains the constant function 1.  The spectral radius formula (see Theorem 5.15 of \cite{MacCluer:2009}) then implies that $\rho(C_\varphi) = 1$.  Thus, property (ii) is satisfied.  This $H^\infty$ belongs to the family of Banach spaces of analytic functions $\mathcal{X}$.

\subsubsection{Bloch space} The Bloch space on $\D$, denoted $\mathcal{B} = \mathcal{B}(\D)$, is the space of analytic functions on $\D$ such that $\beta_f = \displaystyle\sup_{z \in \D}\,(1-|z|^2)|f'(z)| < \infty$.  The quantity $\beta_f$ is a semi-norm, called the Bloch semi-norm.  The Bloch space is a Banach space under the norm $$\|f\|_\mathcal{B} = |f(0)| + \beta_f.$$  

It is well-known that $\mathcal{B}$ is a Banach space of analytic functions that contains $H^\infty$, and thus satisfies property (i) of $\mathcal{X}$.  In fact, every analytic self-map of $\D$ induces a bounded composition operator on $\mathcal{B}$ (see pg. 126 \cite{ArazyFisherPeetre:1985}).  Donaway, in his Ph.D. thesis, Corollary 3.9 of \cite{Donaway:1999}, proved the spectral radius of every composition operator induced by an analytic function on $\D$, and in particular the disk automorphisms, is 1.  So the Bloch space satisfies all the properties of $\mathcal{X}$.

\subsubsection{Analytic functions of bounded mean osciallation}\label{Section:BMOA}  The space of analytic functions on $\D$ with bounded mean oscillation on $\partial \D$, denoted $BMOA$, is defined to be the set of functions in $H(\D)$ such that $$\|f\|_* = \sup_{z \in \D}\,\|f\circ\tau_a - f(z)\|_{H^2} < \infty,$$
where $H^2$ is defined in Section \ref{Section:Hardy Spaces}.  The space $BMOA$ is a Banach space under the norm $$\|f\|_{BMOA} = |f(0)| + \|f\|_*.$$   

It is well-known that $BMOA$ is a Banach space of analytic functions, a subspace of the Bloch space, and contains $H^\infty$ as a subspace since $\|f\|_{BMOA} \leq 3\|f\|_\infty$.  Thus property (i) is satisfied by $BMOA$.  The following result shows property (ii) is satisfied by $BMOA$ also.   

\begin{theorem}
Let $\varphi$ be an analytic self-map of $\D$.  Then $C_\varphi$ acting on $BMOA$ is bounded and $\rho(C_\varphi) = 1$.
\end{theorem}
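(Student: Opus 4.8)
The plan is to separate the statement into its two assertions---$C_\varphi$ is bounded on $BMOA$, and $\rho(C_\varphi)=1$---and to observe that, via the spectral radius formula $\rho(C_\varphi)=\lim_n\|C_\varphi^n\|^{1/n}$, the second follows from a \emph{quantitative} version of the first. The lower bound $\rho(C_\varphi)\ge 1$ is immediate: the constant function $1$ lies in $BMOA$ and $C_\varphi 1 = 1$, so $1$ is an eigenvalue of $C_\varphi$ and hence $1\in\sigma(C_\varphi)$. Everything else rests on a norm estimate for $C_\varphi$ that degrades only logarithmically in $|\varphi(0)|$.

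To bound $C_\varphi$, I would fix $f\in BMOA$ and estimate the two pieces of $\|C_\varphi f\|_{BMOA}=|f(\varphi(0))|+\|f\circ\varphi\|_*$ separately. For the seminorm, fix $a\in\D$, set $b=\varphi(a)$, and write $\varphi\circ\tau_a=\tau_b\circ\psi_a$, where $\psi_a:=\tau_b\circ\varphi\circ\tau_a$ is a self-map of $\D$ fixing $0$ (here $\tau_a$ is the involution from the proof of Lemma \ref{Lemma:elliptic_confequiv}). Then
$$(f\circ\varphi)\circ\tau_a-f(\varphi(a))=\bigl(f\circ\tau_b-f(b)\bigr)\circ\psi_a,$$
the function $f\circ\tau_b-f(b)$ vanishes at $0$ and has $H^2$-norm at most $\|f\|_*$, and Littlewood's subordination principle gives $\|\bigl(f\circ\tau_b-f(b)\bigr)\circ\psi_a\|_{H^2}\le\|f\circ\tau_b-f(b)\|_{H^2}\le\|f\|_*$. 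Taking the supremum over $a$ yields the contraction $\|f\circ\varphi\|_*\le\|f\|_*$. For the point evaluation I would use the Bloch-type growth estimate $|f(z)-f(0)|\le\tfrac12\beta_f\log\tfrac{2}{1-|z|}$ together with $\beta_f\le\|f\|_*$ (which follows from $(1-|a|^2)|f'(a)|=|(f\circ\tau_a-f(a))'(0)|\le\|f\circ\tau_a-f(a)\|_{H^2}$). Combining these gives $\|C_\varphi\|_{BMOA}\le 1+C\log\tfrac{2}{1-|\varphi(0)|}$ for an absolute constant $C$, which in particular proves boundedness.

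For $\rho(C_\varphi)\le 1$ I would apply this estimate to the iterates $\varphi_n=\varphi\circ\cdots\circ\varphi$ ($n$ times), obtaining $\|C_\varphi^n\|=\|C_{\varphi_n}\|\le 1+C\log\tfrac{2}{1-|\varphi_n(0)|}$. A Schwarz--Pick argument controls $\varphi_n(0)$: writing $d$ for the hyperbolic metric on $\D$ normalized so that $d(0,w)=\tfrac12\log\tfrac{1+|w|}{1-|w|}$, the contractivity $d(\varphi(z),\varphi(w))\le d(z,w)$ and the triangle inequality yield, by induction, $d(0,\varphi_n(0))\le n\,d(0,\varphi(0))$; since $1-|w|\ge e^{-2d(0,w)}$, this gives $\log\tfrac{2}{1-|\varphi_n(0)|}\le\log 2+2n\,d(0,\varphi(0))$. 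Hence $\|C_\varphi^n\|$ grows at most linearly in $n$, so $\|C_\varphi^n\|^{1/n}\to 1$ and $\rho(C_\varphi)\le 1$. Together with the lower bound this gives $\rho(C_\varphi)=1$.

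I expect the main obstacle to be the subordination step for the seminorm: one must set up the decomposition $\varphi\circ\tau_a=\tau_b\circ\psi_a$ carefully and invoke Littlewood's theorem with the correct normalization so that composition by $\varphi$ is genuinely contractive for $\|\cdot\|_*$---this contractivity is precisely what keeps the norms of the iterates polynomial rather than exponential, and hence forces the spectral radius down to $1$. The hyperbolic-distance estimate on $\varphi_n(0)$ and the growth bound for $BMOA$ functions are routine by comparison.
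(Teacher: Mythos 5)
Your proposal is correct and follows essentially the same route as the paper: a norm bound of the form $\|C_\varphi\|\le A+B\log\frac{2}{1-|\varphi(0)|}$, applied to the iterates $\varphi_n$ and combined with the at-most-linear growth of $-\log(1-|\varphi_n(0)|)$ coming from the Schwarz--Pick lemma, forces $\|C_\varphi^n\|^{1/n}\to 1$. The only difference is that where the paper imports the norm estimate from Laitila's Corollary 2.2 and the spectral-radius criterion from Donaway's Theorem 3.7, you prove both from scratch (via the decomposition $\varphi\circ\tau_a=\tau_b\circ\psi_a$ with Littlewood subordination, and via the hyperbolic-metric bound on the orbit of $0$), which makes your version self-contained but not a different argument.
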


\begin{proof}
As a result of the Littlewood subordination principle (see Theorem 1.7 of \cite{Duren:1970}), every analytic self-map $\varphi$ of $\D$ induces a bounded composition operator on $BMOA$.

To compute the spectral radius of $C_{\varphi}$ acting on $BMOA$, we first estimate the norm.  By  Corollary 2.2 of \cite{Laitila:2009}, there is a constant $C>0$, independent of $\varphi$, such that 
\begin{equation}\label{eqn:normBMOA}\|C_{\varphi}\|\leq C\left(\sup_{a\in\D}\|\tau_{\varphi(a)}\circ\varphi\circ\tau_a\|_{H^2}+\log\frac2{1-|\varphi(0)|^2}\right).
\end{equation}  Since the function $\tau_{\varphi(a)}\circ\varphi\circ\tau_a$ is a composition of self-maps of the disk, the first term on the right is bounded above by 1.  Also, $$\frac{1}{1-|\varphi(0)|^2}\leq\frac{1+|\varphi(0)|}{1-|\varphi(0)|} \leq \frac2{1-|\varphi(0)|}$$ and hence $$\log\left(\frac{2}{1-|\varphi(0)|^2}\right)\leq\log\left(\frac{4}{1-|\varphi(0)|}\right)\leq 2\log2-\log\left(1-|\varphi(0)|\right).$$  Applying these estimates to Eq.~(\ref{eqn:normBMOA}), we have $$\|C_{\varphi}\|\leq 
C(1+2\log2)-C\log(1-|\varphi(0)|).$$  This immediately implies that $$\|C_{\varphi_n}\|\leq C(1+2\log2)-C\log(1-|\varphi_n(0)|)$$ and it follows that $\rho(C_{\varphi})=1$ for all bounded composition operators acting on $BMOA$ by Theorem 3.7 of \cite{Donaway:1999}.
\end{proof}

Thus $BMOA$ satisfies all the properties of $\mathcal{X}$.

\subsection{Comparisons}\label{Section:Comparison}
We now investigate spaces that do not satisfy the properties of $\mathcal{X}$.  We compare the spectrum of induced composition operators on these spaces with those on $\mathcal{X}$.

\subsubsection{Hardy spaces}\label{Section:Hardy Spaces} For $1 \leq p < \infty$, the Hardy space, denoted $H^p = H^p(\D)$, is the space of analytic functions on $\D$ such that $$\|f\|_{H^p}^p = \sup_{0<r<1}\, \int_{\D}|f(re^{i\theta})|^p\frac{d\theta}{2\pi} < \infty.$$  Under this norm, the Hardy spaces are Banach spaces and for $p=2$ it is a Hilbert space.

It is well known that $H^p$ is a Banach space of analytic functions that contains $H^\infty$ as a subspace.  For the cases of an elliptic or parabolic automorphism $\varphi$, it is the case that $\rho(C_\varphi) = 1$ and the spectrum of $C_\varphi$ on $H^p$ is the same as for $C_\varphi$ acting on $\mathcal{X}$ (see Theorem 3.9 of \cite{CowenMacCluer:1995}).  However, it is not the case that the spectral radius is 1 for every composition operator induced by an automorphism.  In fact, if $\varphi$ is hyperbolic, then $\rho(C_\varphi) = \varphi'(a)^{-1/p}$ where $a$ is the Denjoy-Wolff point of $\varphi$ (see Theorem 3.9 of \cite{CowenMacCluer:1995}).  In this situation, $\varphi'(a) < 1$ thus making $\rho(C_\varphi) > 1$.  In turn, the spectrum is the annulus $\varphi'(a)^{1/p} \leq |z| \leq \varphi'(a)^{-1/p}$ (see Theorem 4.9 of \cite{OH:2013}).

\subsubsection{Weighted Bergman spaces} For $1 \leq p < \infty$ and $\alpha > -1$, the standard weighted Bergman space, denoted $A_\alpha^p = A_\alpha^p(\D)$, is the space of analytic functions on $\D$ such that $$\|f\|_{A_\alpha^p}^p = \int_{\D} (1-|z|^2)^\alpha|f(z)|^p\;dA(z) < \infty,$$ where $dA(z)$ is the normalized Lebesgue area measure on $\D$.  The weighted Bergman spaces are Banach spaces under the norm $\|\cdot\|_{A_\alpha^p}$.

It is well known that $A_\alpha^p$ is a Banach space of analytic functions that contains $H^\infty$ as a subspace.  For the cases of an elliptic or parabolic automorphism $\varphi$, it is the case that $\rho(C_\varphi) = 1$ and the spectrum of $C_\varphi$ on $A_\alpha^p$ is the same as for $C_\varphi$ acting on $\mathcal{X}$ (see Lemma 4.2 and Theorem 4.14 of \cite{OH:2013}).  However, as was the case for the Hardy spaces, it is not the case that the spectral radius is 1 for every composition operator induced by an automorphism.  In fact, if $\varphi$ is hyperbolic, then $$\rho(C_\varphi) = \max\left\{\frac{1}{\varphi'(a)^s},\frac{1}{\varphi'(b)^s}\right\}$$ where $s = \frac{\alpha+2}{p}$, $a$ is the Denjoy-Wolff point and $b$ is the other fixed point of $\varphi$ (see Theorem 4.6 of \cite{OH:2013}).  In turn, the spectrum contains the annulus $$\min\left\{\frac{1}{\varphi'(a)^s},\frac{1}{\varphi'(b)^s}\right\} \leq |z| \leq \max\left\{\frac{1}{\varphi'(a)^s},\frac{1}{\varphi'(b)^s}\right\}$$ (see Corollary 4.7 of \cite{OH:2013}).

\subsubsection{Weighted Banach spaces} For $0 < p < \infty$, the standard weighted Banach space on $\D$, denoted $H^\infty_p = H^\infty_p(\D)$, is the space of analytic functions on $\D$ such that $$\|f\|_{H^\infty_p} = \sup_{z \in \D}\,(1-|z|^2)^p|f(z)| < \infty,$$  The weighted Banach spaces are, not surprising, Banach spaces under the norm $\|\cdot\|_{H^\infty_p}$.

It is well known that $H^\infty_p$ is a Banach space of analytic functions that contain $H^\infty$ as a subspace.  For the cases of an elliptic or parabolic automorphism $\varphi$, it is the case that $\rho(C_\varphi) = 1$ and the spectrum of $C_\varphi$ on $H^\infty_p$ is the same as for $C_\varphi$ acting on $\mathcal{X}$ (see Lemma 4.2 and Theorem 4.14 of \cite{OH:2013}).  However, as was the case for the Hardy spaces, it is not the case that the spectral radius is 1 for every composition operator induced by an automorphism.  In fact, if $\varphi$ is hyperbolic, then $$\rho(C_\varphi) = \max\left\{\frac{1}{\varphi'(a)^s},\frac{1}{\varphi'(b)^s}\right\}$$ where $a$ is the Denjoy-Wolff point and $b$ is the other fixed point of $\varphi$ (see Theorem 4.6 of \cite{OH:2013}).  In turn, the spectrum contains the annulus $$\min\left\{\frac{1}{\varphi'(a)^p},\frac{1}{\varphi'(b)^p}\right\} \leq |z| \leq \max\left\{\frac{1}{\varphi'(a)^p},\frac{1}{\varphi'(b)^p}\right\}$$ (see Corollary 4.7 of \cite{OH:2013}).

\subsubsection{Dirichlet space} The Dirichlet space on $\D$, denoted $\mathcal{D}$, is the space of analytic functions on $\D$ such that $$\int_{\D}|f'(z)|^2\;dA(z) < \infty$$ where $dA$ denotes the normalized Lebesgue area measure on $\D$.  Under the norm $$\|f\|_\mathcal{D}^2 = |f(0)|^2 + \int_{\D}|f'(z)|^2\;dA(z)$$ the Dirichlet space has a Hilbert space structure.  Although not every analytic self-map of $\D$ induce bounded composition operators on $\mathcal{D}$, univalent maps, and thus the automorphisms, of $\D$ do.

Independently, Donaway (Corollary 3.11 of \cite{Donaway:1999}) and Mart\'in and Vukoti\'c (Theorem 7 of \cite{MartinVukotic:2005}) showed that composition operators on $\mathcal{D}$ induced by univalent self-maps of $\D$, and thus the automorphisms, have spectral radius 1.  However, by direct calculation one can see that the functions in $\mathcal{F}$ are not contained in the Dirichlet space; for the case of $f_s$ this is shown in \cite{Pons:2010} (see pg. 455). Despite $\mathcal{D}$ not satisfying all the properties of $\mathcal{X}$, the spectrum of automorphism induced composition operators on $\mathcal{D}$ are precisely the same as those on $\mathcal{X}$.  

To overcome the lack of eigenfunctions, the authors in \cite{Higdon:1997} and \cite{GutierrezRodriguez:2003} used two new approaches.  In \cite{Higdon:1997}, the author produces approximate eigenfunctions and in \cite{GutierrezRodriguez:2003} unitary similarity is the key tool.

\begin{remark}
For all of the spaces discussed in Sections \ref{Section:Examples} and \ref{Section:Comparison} (and those discussed in the next section), the spectrum of $C_\varphi$ when $\varphi$ is elliptic will be the same as that for $C_\varphi$ acting on $\mathcal{X}$.  This is due to the fact that the eigenfunctions are the monomials, which are contained in all of these spaces.  
\end{remark}

\section{Open Questions}
We end this paper with open questions which were inspired while developing the examples and comparisons in Sections \ref{Section:Examples} and \ref{Section:Comparison}.  

\subsection{The little Bloch space}  While the Bloch space contains the polynomials, they are not dense in $\mathcal{B}$.  The closure of the polynomials in $\|\cdot\|_{\mathcal{B}}$ is called the little Bloch space, denoted $\mathcal{B}_0 = \mathcal{B}_0(\D)$.  More formally, the little Bloch space consists of the functions $f \in \mathcal{B}$ such that $$\lim_{|z| \to 1} (1-|z|^2)|f'(z)| = 0.$$

From Theorem 12 of \cite{ArazyFisherPeetre:1985}, bounded composition operators on $\mathcal{B}_0$ are induced exactly by functions \textit{in} $\mathcal{B}_0$, which include the automorphisms.  Donaway also proved the spectral radius of every bounded composition operator on $\mathcal{B}_0$ is 1.   Thus property (ii) is satisfied by $\mathcal{B}_0$. However, the following result shows that $\mathcal{F}$ is not contained in $\mathcal{B}_0$, and thus property (i) of $\mathcal{X}$ is not satisfied.

\begin{theorem}\label{LittleBlochSpace}
The functions $f_s$ and $f_t$, for $s > 0$ and $t \neq 0$, are not contained in the little Bloch space.
\end{theorem}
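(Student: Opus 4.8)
The plan is to argue directly from the definition of $\mathcal{B}_0$: a function is \emph{excluded} from $\mathcal{B}_0$ as soon as $(1-|z|^2)|f'(z)|$ fails to tend to $0$ as $|z|\to 1$, so it suffices to exhibit for each function a sequence $(z_n)$ in $\D$ with $|z_n|\to 1$ along which this weighted derivative stays bounded below by a fixed positive constant. (Both families already lie in $\mathcal{B}$, since $\mathcal{F}\subset H^\infty\subset\mathcal{B}$ by Lemmas~\ref{Lemma:exp_bloch} and~\ref{Lemma:log_bloch}, so only the boundary decay is at issue.) I would dispose of $f_t$ first and $f_s$ second.

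For $f_t(z)=\big(\tfrac{1+z}{1-z}\big)^{it}$ with $t\neq 0$, write $f_t(z)=\exp\!\big(it\log\tfrac{1+z}{1-z}\big)$, so that $f_t'(z)=f_t(z)\cdot\tfrac{2it}{1-z^2}$. Approach the boundary point $1$ radially: set $z_n=1-\tfrac1n\in(0,1)$. Then $\tfrac{1+z_n}{1-z_n}$ is a positive real number, so $|f_t(z_n)|=1$, and hence $(1-|z_n|^2)\,|f_t'(z_n)|=2|t|$, a positive constant independent of $n$. Thus the defining limit for $\mathcal{B}_0$ fails and $f_t\notin\mathcal{B}_0$.

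For $f_s(z)=\exp\!\big(\tfrac{s(z+1)}{z-1}\big)$ with $s>0$, the radial approach is of no use: along $z=x\to 1^-$ one has $\mathrm{Re}\,\tfrac{z+1}{z-1}\to-\infty$, so $|f_s|$ decays to $0$ faster than $1-|z|^2$ vanishes and the weighted derivative tends to $0$. Instead I would approach $1$ along a horocycle, i.e.\ along a level curve $\{\,\mathrm{Re}\,\tfrac{z+1}{z-1}=-c\,\}$ for a fixed $c>0$. Since $w\mapsto\tfrac{w+1}{w-1}$ maps $\H_\ell$ onto $\D$ (it is the inverse of $z\mapsto\tfrac{z+1}{z-1}$ appearing in the proof of Lemma~\ref{Lemma:exp_bloch}), put $w_n=-c+in$ and $z_n=\tfrac{w_n+1}{w_n-1}$, so $z_n\in\D$ and $z_n\to 1$. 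Now two elementary identities for $z=\tfrac{w+1}{w-1}$, namely $1-|z|^2=\tfrac{-4\,\mathrm{Re}\,w}{|w-1|^2}$ and $z-1=\tfrac{2}{w-1}$, combined with $|f_s'(z)|=\tfrac{2s}{|z-1|^2}\exp\!\big(s\,\mathrm{Re}\,\tfrac{z+1}{z-1}\big)$, collapse the weight to the strikingly simple expression $(1-|z|^2)\,|f_s'(z)|=-2s\,(\mathrm{Re}\,w)\,e^{s\,\mathrm{Re}\,w}$; along $\mathrm{Re}\,w=-c$ this is the positive constant $2sc\,e^{-sc}$. Hence $(1-|z_n|^2)\,|f_s'(z_n)|\not\to 0$ and $f_s\notin\mathcal{B}_0$.

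The step I expect to be the real obstacle is recognizing that $z_n$ must tend to $1$ through a horocycle rather than radially; after the substitution $w=\tfrac{z+1}{z-1}$ the weight becomes a function of $\mathrm{Re}\,w$ alone and the rest is routine. (Taking $c=1/s$ yields the clean value $2/e$, though any fixed $c>0$ works equally well.)
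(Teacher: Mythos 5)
Your proposal is correct and follows essentially the same route as the paper: the same derivative computations, a radial approach for $f_t$, and for $f_s$ the same sequence $z_n=\frac{w_n+1}{w_n-1}$ with $w_n$ on a vertical line $\mathrm{Re}\,w=-c$ in the left half-plane (the paper writes $x_0=-c$), yielding the same limiting value $-2s(\mathrm{Re}\,w)e^{s\,\mathrm{Re}\,w}>0$. The only cosmetic difference is that you use the exact identity $|f_t|=1$ on the radius where the paper uses a uniform lower bound $|f_t|\geq C$.
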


\begin{proof}
Consider the function $$f_t(z)=\exp\left(it\log\frac{1+z}{1-z}\right).$$ We show that this function is not in $\mathcal{B}_0$ for $t\in\mathbb{R}\setminus\{0\}.$  Taking the derivative, $$\begin{aligned}f'_t(z)=f_t(z)\left(it\frac{1-z}{1+z}\right)\frac{2}{(1-z)^2}&=f_t(z)\frac{2it}{(1-z)(1+z)}.\end{aligned}$$

For $t>0$, $|f_t(z)|\geq e^{-2\pi}$ and, for $t<0$, $|f_t(z)|\geq 1$. In either case, there is a constant $C>0$ such that $|f_t(z)|\geq C$ for all $t\in \R\setminus\{0\}$ and all $z\in\mathbb{D}$.  Hence $$|f'_t(z)|\geq\frac{2C|t|}{|z-1||z+1|}.$$

To show that $f_t\not\in\mathcal{B}_0$, we need to show that $$\lim_{|z|\rightarrow 1}(1-|z|^2)|f'_t(z)|\neq 0.$$  To see this, first observe that $$\lim_{|z|\rightarrow 1}(1-|z|^2)|f'_t(z)|\geq\lim_{|z|\rightarrow 1}(1-|z|^2)\frac{2C|t|}{|z-1||z+1|}$$ by our estimate from above.  If we now take a radial path to 1, that is, we set $z=r$ and let $r\uparrow 1$, we have $$\lim_{r\rightarrow 1^-}(1-r^2)\frac{2C|t|}{(1-r)(1+r)}=2C|t|>0$$ when $t\neq 0$.  Thus $$\lim_{|z|\rightarrow 1}(1-|z|^2)\frac{2Ct}{|z-1||z+1|}\neq 0$$ for $t \neq 0$, and hence $f_t$ is not in $\mathcal{B}_0$.

Next consider the function $$f_s(z)=\exp\left(\frac{s(z+1)}{z-1}\right).$$ We will show that this function is not in $\mathcal{B}_0$ for $s>0$.  First observe that $$f'_s(z)=\exp\left(\frac{s(z+1)}{z-1}\right)\left(\frac{-2s}{(z-1)^2}\right)$$ and thus we aim to show that 
$$\lim_{|z|\rightarrow 1}(1-|z|^2)|f'_s(z)|=\lim_{|z|\rightarrow 1}(1-|z|^2)\left|\exp\left(\frac{s(z+1)}{z-1}\right)\right|\frac{2s}{|1-z|^2}\neq 0.$$

Fix $x_0<0$ and consider the sequence $\{z_n\}$ defined by $$z_n=\frac{x_0+in+1}{x_0+in-1}.$$  Since $x_0<0$, this sequence is contained in the unit disk and $\{z_n\}\rightarrow1$ as $n\rightarrow \infty.$ To obtain our conclusion, we show $$\lim_{n\rightarrow\infty}(1-|z_n|^2)\left|\exp\left(\frac{s(z_n+1)}{z_n-1}\right)\right|\frac{2s}{|1-z_n|^2}\neq 0.$$

First observe that the map $\psi(z)=(z+1)/(z-1)$ is its own inverse and hence $\psi(z_n)=x_0+in$ for each $n\in \mathbb{N}$.  Thus $$\left|\exp\left(\frac{s(z_n+1)}{z_n-1}\right)\right|=|\exp(sx_0+isn)|=e^{sx_0}>0.$$  Substituting, $$\lim_{n\rightarrow\infty}(1-|z_n|^2)\left|\exp\left(\frac{s(z_n+1)}{z_n-1}\right)\right|\frac{2s}{|1-z_n|^2}
=\lim_{n\rightarrow\infty}e^{sx_0}(1-|z_n|^2)\frac{2s}{|1-z_n|^2}.$$ Next, $$1-|z_n|^2=\frac{-4x_0}{(x_0-1)^2+n^2}$$ and $$|1-z_n|^2=\frac{4}{(x_0-1)^2+n^2}.$$  Thus $$\begin{aligned}\lim_{n\rightarrow\infty}e^{sx_0}(1-|z_n|^2)\frac{2s}{|1-z_n|^2}&=
\lim_{n\rightarrow\infty}e^{sx_0}\left(\frac{-4x_0}{(x_0-1)^2+n^2}\right)\left(\frac{s((x_0-1)^2+n^2)}{2}\right)\\
&=\lim_{n\rightarrow\infty}(-2sx_0)e^{sx_0}>0\end{aligned}$$ and hence $f_s$ is not in $\mathcal{B}_0$ for $s > 0$.
\end{proof}

\noindent For the little Bloch space, we leave the reader with the following question.

\begin{question} For $\varphi$ a parabolic or hyperbolic automorphism, what is the spectrum of $C_\varphi$ on the little Bloch space?
\end{question}

\subsection{Analytic functions of vanishing mean oscillation} Like the Bloch space, $BMOA$ contains the polynomials, but they are not dense in $BMOA$.  The closure of the polynomials in $\|\cdot\|_{\mathrm{BMOA}}$ is denoted by $VMOA$.  $VMOA$ is the space of analytic functions with vanishing mean osciallation on $\partial\D$, formally defined as the functions $f \in \mathrm{BMOA}$ such that
$$\lim_{|a|\to 1}\,\|f\circ\tau_a - f(a)\|_{H^2} = 0.$$  

By Corollary 4.2 of \cite{Laitila:2009}, $C_\varphi$ is bounded on $VMOA$ if and only if $\varphi \in VMOA$.  So every automorphism induces a bounded composition operator on $VMOA$.  By the same argument as in Section \ref{Section:BMOA}, the spectral radius of $C_\varphi$ induced by a disk automorphism is 1.  Thus property (ii) of $\mathcal{X}$ is satisfied.  Since $VMOA$ is a subspace of the little Bloch space (see \cite{GG:2013}), it follows that $VMOA$ does not satisfy property (i), a corollary of Theorem \ref{LittleBlochSpace}.

\begin{corollary}
The functions $f_s$ and $f_t$, for $s > 0$ and $t \neq 0$, are not contained in $VMOA$.
\end{corollary}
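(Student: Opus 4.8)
The plan is to deduce this as an immediate consequence of Theorem \ref{LittleBlochSpace} together with the containment $VMOA \subseteq \mathcal{B}_0$ recorded just above (see \cite{GG:2013}). The point is that the hard analytic work — producing, for each relevant parameter, a sequence approaching $\partial\D$ along which $(1-|z|^2)|f'(z)|$ stays bounded away from $0$ — has already been carried out in the proof of Theorem \ref{LittleBlochSpace}, so here there is nothing left to compute.

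Concretely, I would argue by contraposition. Suppose, for some $s > 0$, that $f_s \in VMOA$. Since every function in $VMOA$ lies in the little Bloch space $\mathcal{B}_0$, we would conclude $f_s \in \mathcal{B}_0$, which directly contradicts Theorem \ref{LittleBlochSpace}. Hence $f_s \notin VMOA$ for every $s > 0$. The identical argument applied to $f_t$, using the portion of Theorem \ref{LittleBlochSpace} that treats $f_t$ for $t \neq 0$, shows $f_t \notin VMOA$ for every $t \neq 0$, which is the claim.

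There is no real obstacle in this argument; the only nontrivial input is the subspace relation $VMOA \subseteq \mathcal{B}_0$, which is standard. One could in principle instead attempt a direct verification that $\lim_{|a|\to 1}\|f_s\circ\tau_a - f_s(a)\|_{H^2} \neq 0$ (and similarly for $f_t$), but this would amount to re-deriving the estimates of Theorem \ref{LittleBlochSpace} in a less convenient setting, so the indirect route through the little Bloch space is clearly the efficient one.
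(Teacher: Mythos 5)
Your argument is correct and is exactly the one the paper intends: the corollary is deduced from Theorem \ref{LittleBlochSpace} via the containment $VMOA \subseteq \mathcal{B}_0$ cited just before the statement. Nothing further is needed.
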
 

\noindent For $VMOA$, we leave the reader with the following question.

\begin{question} For $\varphi$ parabolic or hyperbolic automorphism, what is the spectrum of $C_\varphi$ on $VMOA$?
\end{question} 

\section*{Acknowledgements}
\noindent The work of the second author was conducted while an undergraduate student at the University of Wisconsin-La Crosse and funded by the College of Science and Health Dean's Distinguished Fellowship.  

\bibliographystyle{amsplain}
\bibliography{AllenLePons.bib}

\end{document}